\theoremstyle{definition}
\newtheorem{theorem}{Theorem}[section]
\newtheorem{proposition}[theorem]{Proposition}
\newtheorem{corollary}[theorem]{Corollary}
\newtheorem{lemma}[theorem]{Lemma}
\newtheorem{definition}[theorem]{Definition}
\newtheorem{remark}[theorem]{Remark}
\newtheorem{example}[theorem]{Example}
\newtheorem*{thm*}{Theorem}
\DeclareMathOperator{\rank}{rank}
\DeclareMathOperator{\sgn}{sgn}
\DeclareMathOperator{\SYT}{SYT}
\DeclareMathOperator{\im}{im}
\renewcommand{\geq}{\geqslant}
\renewcommand{\leq}{\leqslant}
\newcommand{\NN}{\mathbb{N}}
\title[]{On the ideal generated by all squarefree monomials of a given
  degree}
\date{\today}
\keywords{squarefree monomial ideal, equivariant resolution,
  characteristic-free, De Concini-Procesi, FI-module}
\subjclass[2000]{Primary 13D02; Secondary 13A50}
\author{Federico Galetto}
\address{McMaster University\\1280 Main St W\\Hamilton Hall 407\\
  Hamilton, ON, L8S 4K1\\Canada}
\email{galettof@math.mcmaster.ca}
\urladdr{http://math.galetto.org}
\begin{document}

\begin{abstract}
  An explicit construction is given of a minimal free resolution of
  the ideal generated by all squarefree monomials of a given
  degree. The construction relies upon and exhibits the natural action
  of the symmetric group on the syzygy modules. The resolution is
  obtained over an arbitrary coefficient ring; in particular, it is
  characteristic free. Two applications are given: an equivariant
  resolution of De Concini-Procesi rings indexed by hook partitions,
  and a resolution of FI-modules.
\end{abstract}

\maketitle{}

\tableofcontents{}

\section{Introduction}
\label{sec:introduction}

Let $A$ be an associative, commutative ring with unit.  Let $R_n$
denote $A [x_1,\ldots,x_n]$, the polynomial ring in $n$ variables with
coefficients in $A$.

\begin{definition}
  Let $I_{d,n}$ be the ideal of $R_n$ generated by all squarefree
  monomials of degree $d$ in $x_1,\ldots,x_n$.
\end{definition}

The symmetric group $\mathfrak{S}_n$ acts on $R_n$ by permuting the
variables. This action is $A$-linear, and is compatible with grading
and multiplication in $R_n$. Observe that the ideal $I_{d,n}$ is
stable under the action of $\mathfrak{S}_n$. Therefore
$\mathfrak{S}_n$ acts on the minimal free resolutions of
$I_{d,n}$. The main goal of this paper is to construct an
$\mathfrak{S}_n$-equivariant minimal free resolution of $I_{d,n}$ that
describes this action explicitly. Our main theorem (Theorem
\ref{thm:2}) can be stated as follows.

\begin{thm*}
  Let $n$, $d$, and $i$ be integers. For $1\leq d\leq n$ and
  $0\leq i\leq n-d$, we define the $A[\mathfrak{S}_n]$-module
  \begin{equation*}
    U^{d,n}_i :=
    \operatorname{Ind}^{\mathfrak{S}_n}_{\mathfrak{S}_{d+i} \times
      \mathfrak{S}_{n-d-i}} ( S^{(d,1^i)} \otimes_A S^{(n-d-i)} ),
  \end{equation*}
  For all other values of $n$, $d$, and $i$, we set $U^{d,n}_i:=0$.
  The ideal $I_{d,n}$ admits an $\mathfrak{S}_n$-equivariant
  minimal free resolution $F^{d,n}_\bullet$ with
  \begin{equation*}
    F^{d,n}_i := U^{d,n}_i \otimes_A R_n (-d-i).
  \end{equation*}
\end{thm*}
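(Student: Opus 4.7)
The plan is to exhibit $F^{d,n}_\bullet$ explicitly with Koszul-style differentials built from polytabloids of hook Specht modules, to deduce equivariance and the complex property by construction, and to establish exactness by induction on $n$ using a short exact sequence of ideals together with the branching rule for hook Specht modules.

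By Frobenius reciprocity, $U^{d,n}_i$ has an integral basis indexed by pairs $(T,e_t)$, where $T\subseteq\{1,\dots,n\}$ has cardinality $d+i$ and $e_t$ is a standard polytabloid of shape $(d,1^i)$ with entries in $T$. Thus $F^{d,n}_i$ is a free $R_n$-module of rank $\binom{n}{d+i}\binom{d+i-1}{i}$ concentrated in internal degree $d+i$, matching the expected linear Betti numbers of $I_{d,n}$. I would define $\partial_i$ by the Koszul-type alternating sum along the first column: if the first column of $t$ reads $c_0<c_1<\dots<c_i$, then
\[
\partial_i(e_t\otimes 1)\;=\;\sum_{j=0}^{i}(-1)^j\, x_{c_j}\,(e_{t^{(j)}}\otimes 1),
\]
where $e_{t^{(j)}}$ is the polytabloid of shape $(d,1^{i-1})$ on $T\setminus\{c_j\}$ obtained by removing $c_j$ (rewritten in the standard basis via Garnir relations when the result is non-standard). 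For $i=0$, I set $\partial_0(e_T\otimes 1)=\prod_{j\in T}x_j\in I_{d,n}$. Equivariance is manifest from the polytabloid construction, and $\partial^2=0$ reduces to the classical cancellation argument for Koszul differentials acting on the first column.

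The core of the proof is exactness in positive homological degree, which I would show by induction on $n$. The base case $n=d$ gives the trivial resolution of the principal ideal $(x_1\cdots x_d)$. For the inductive step, use the short exact sequence of $R_n$-modules
\[
0\longrightarrow x_n I_{d,n-1}\longrightarrow I_{d,n-1}\oplus x_n I_{d-1,n-1}\longrightarrow I_{d,n}\longrightarrow 0,
\]
with first map $c\mapsto(c,-c)$ and second map summation. Restricting $F^{d,n}_\bullet$ from $\mathfrak{S}_n$ to $\mathfrak{S}_{n-1}$, Mackey's formula splits each $U^{d,n}_i$ according to whether $n\in T$ or not, and the branching short exact sequence
\[
0\longrightarrow S^{(d-1,1^i)}\longrightarrow \operatorname{Res}^{\mathfrak{S}_{d+i}}_{\mathfrak{S}_{d+i-1}}\!S^{(d,1^i)}\longrightarrow S^{(d,1^{i-1})}\longrightarrow 0,
\]
valid over $\mathbb{Z}$, further filters the $n\in T$ piece. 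These decompositions should glue into a short exact sequence of complexes whose outer terms are, by induction, resolutions of $x_n I_{d,n-1}$ and $I_{d,n-1}\oplus x_n I_{d-1,n-1}$ after tensoring up to $R_n$; the long exact sequence in homology then forces $H_i(F^{d,n}_\bullet)=0$ for $i>0$. Minimality is automatic because $F^{d,n}_i$ is concentrated in internal degree $d+i$, so every entry of $\partial_i$ lies in the maximal ideal $(x_1,\dots,x_n)$.

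The main technical obstacle is the compatibility in the inductive step: one must verify that the Mackey--branching decomposition of the restricted complex actually respects the Koszul-style differentials, producing a genuine short exact sequence of complexes lifting the ideal sequence above. Since the branching short exact sequence is non-split in positive characteristic, this compatibility must be checked intrinsically on polytabloids rather than via equivariant splittings, in order to preserve the characteristic-free nature of the result.
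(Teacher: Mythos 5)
Your construction of the modules and differentials coincides with the paper's (up to an immaterial overall sign), and your argument for $\partial^2=0$, equivariance, and minimality is fine. The exactness argument, however, does not go through as stated.

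The short exact sequence of ideals you propose,
\begin{equation*}
0\to x_n I_{d,n-1}R_n \to I_{d,n-1}R_n\oplus x_n I_{d-1,n-1}R_n \to I_{d,n}\to 0,
\end{equation*}
is correct, but it cannot be lifted to a short exact sequence of complexes whose terms are the minimal resolutions $F^{d,n-1}_\bullet\otimes R_n(-1)$, $(F^{d,n-1}_\bullet\otimes R_n)\oplus(F^{d-1,n-1}_\bullet\otimes R_n(-1))$, and $F^{d,n}_\bullet$. A short exact sequence of free complexes forces the ranks to be additive in each homological degree, and that fails: at homological degree $1$ one would need
\begin{equation*}
d\tbinom{n-1}{d+1}\;+\;d\tbinom{n}{d+1}\;=\;d\tbinom{n-1}{d+1}\;+\;(d-1)\tbinom{n-1}{d},
\end{equation*}
i.e., $d\binom{n}{d+1}=(d-1)\binom{n-1}{d}$, which is false (for $n=4$, $d=2$ the two sides are $8$ and $3$). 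The underlying reason is that $x_n I_{d,n-1}\hookrightarrow I_{d,n-1}$ is not a split inclusion, so Betti numbers are not additive along your sequence; a resolution of the middle term adapted to the sequence must be non-minimal, and the long exact sequence in homology will not then yield $H_i(F^{d,n}_\bullet)=0$ directly. The Mackey plus branching decomposition you invoke does give the correct direct-sum decomposition of each \emph{module} $F^{d,n}_i$ after restricting to $\mathfrak{S}_{n-1}$ (this is \eqref{eq:6} in the paper), but that module-level isomorphism does not assemble into a short exact sequence of subcomplexes of the shape you describe, independently of the secondary issue you flag about matching it with the Koszul-type differentials.

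For comparison, the paper avoids any global induction on $n$ at the level of the equivariant complex. It first establishes the Betti numbers of $I_{d,n}$ (Theorem \ref{thm:1}) by an \emph{iterated} mapping cone built from the two sequences \eqref{eq:1} and \eqref{eq:2}, which correctly accounts for the non-additivity above by shifting homological degree in the cone. Then exactness of $F^{d,n}_\bullet$ is proved position by position (Lemma \ref{lem:1} and Theorem \ref{thm:2}): the degree-$(d+i)$ piece of $\partial^{d,n}_i$ is shown to admit a left inverse over $A$ by exhibiting a permutation submatrix indexed by tableaux with the bottom box of the first column removed, and then a rank count over $A$ (using the known Betti numbers) upgrades this to an isomorphism onto the kernel. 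If you want to salvage an induction on $n$, you should instead filter $F^{d,n}_\bullet$ by the subcomplex of tableaux not containing $n$ (which is genuinely a subcomplex and equals $F^{d,n-1}_\bullet\otimes_{R_{n-1}}R_n$) and then argue separately that the quotient complex resolves $I_{d,n}/I_{d,n-1}R_n$; but that quotient complex is not directly handled by your inductive hypothesis and would need its own argument, so this route ends up essentially reproducing the iterated mapping cone of Theorem \ref{thm:1}.
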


Our construction has some desirable features:
\begin{itemize}[wide]
\item the differentials are described explicitly (see Definition
  \ref{def:1});
\item the resolution is realized over an arbitrary coefficient ring,
  in particular it is characteristic free;
\item the representation theory leads to an easy combinatorial
  interpretation of Betti numbers in terms of Young tableaux
  (Corollary \ref{cor:1}).
\end{itemize}

The ideals $I_{d,n}$ appear in the study of De Concini-Procesi
rings. Introduced in \cite{MR629470}, De Concini-Procesi rings are
quotients of a polynomial ring that give a presentation for the
cohomology ring of Springer fibers (in type A). These rings also
appear in the study of nilpotent orbits \cite{MR646820}. In general,
the Betti numbers and minimal free resolutions of De Concini-Procesi
are not known. In the case of De Concini-Procesi rings indexed by hook
partitions, the Betti numbers were described in \cite{MR2371263}. The
proof, which holds over a field, recognizes $I_{d,n}$ as a monomial
ideal with linear quotients to compute its Betti numbers, and then
uses an iterated mapping cone to obtain the Betti numbers of a certain
De Concini-Procesi ring. Using the same mapping cone procedure, we
give an application of our main result and describe the modules in an
$\mathfrak{S}_n$-equivariant minimal free resolution of a De
Concini-Procesi ring indexed by a hook partition (see Theorem
\ref{thm:3}).  We believe that an approach using equivariant
resolutions could facilitate the task of finding Betti numbers for
other De Concini-Procesi rings as well.

The resolutions $F^{d,n}_\bullet$ of our main theorem have another
interesting property. Namely, for a fixed $d$, there are natural
$\mathfrak{S}_n$-equivariant maps of complexes
$F^{d,n}_\bullet \to F^{d,n+1}_\bullet$. These maps allow us to
assemble the complexes $F^{d,n}_\bullet$ to obtain a resolution of
FI-modules in the sense of \cite{MR3357185} (see Theorem \ref{thm:4}).

We also note that $I_{d,n}$ is the defining ideal of a star
configuration of $n$ hyperplanes in projective $(n-1)$-space (see
\cite{MR3003727}). The Betti numbers of these ideals were computed
(over a field) in \cite[Corollary 3.5]{MR3299722}. Despite being a
special case of star configurations, it was shown in \cite{1507.00380}
that the ideals $I_{d,n}$ actually govern the theory of arbitrary star
configurations and their symbolic powers. For this reason we believe
it would be particularly interesting to extend the main result of this
paper to describe $\mathfrak{S}_n$-equivariant minimal free
resolutions of the symbolic powers of $I_{d,n}$.

I am grateful to J.~Weyman for introducing me to De Concini-Procesi
rings, which ultimately lead to this project. I am particularly
indebted to A.~Hoefel and D.~Wehlau for many useful discussions in the
early stages of this work. Special thanks go to H.~Abe for providing a
reference that shortened the proof of the main theorem.  While working
on this project, I was partially supported by an NSERC grant.

\section{Betti numbers}
\label{sec:betti-numbers}

As explained in the introduction, the Betti numbers of $I_{d,n}$ were
previously computed, over a field, by several authors using various
techniques. In addition, the Betti numbers of $I_{d,n}$ can be
recovered, over various coefficient rings, using Stanley-Reisner
theory \cite[\S 5]{MR1251956}, properties of squarefree stable ideals
\cite{MR1630500}, or the generic perfection of the Eagon-Northcott
complex \cite[Theorem 3.5]{MR953963}. Since we will be using these
Betti numbers later, we show a different way to compute them over an
arbitrary coefficient ring using mapping cones.

\begin{theorem}
  \label{thm:1}
  Let $n$ and $d$ be integers with $1\leq d\leq n$.
  The Betti numbers of $I_{d,n}$ are
  \begin{equation*}
    \beta_{i,j} (I_{d,n}) =
    \begin{cases}
      \binom{n}{d+i} \binom{d+i-1}{i},
      &\text{if }0 \leq i \leq n-d \text{ and } j = d+i,\\
      0, &\text{otherwise}.
    \end{cases}
  \end{equation*}
\end{theorem}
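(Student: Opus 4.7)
The plan is to induct on $n$, using mapping cones (or equivalently, long exact sequences of $\operatorname{Tor}$) attached to two short exact sequences. The base case $n = d$ is immediate, since $I_{d,d} = (x_1 \cdots x_d)$ is principal and resolved by a single copy of $R_n(-d)$; alternatively $d = 1$ is handled by the Koszul resolution of the maximal ideal $I_{1,n} = (x_1, \ldots, x_n)$.

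For the inductive step, fix $n \geq d+1$ and assume the theorem for $I_{d,n-1}$ and $I_{d-1,n-1}$. The first short exact sequence is tautological:
\begin{equation*}
  0 \to I_{d,n-1} \to I_{d,n} \to I_{d,n}/I_{d,n-1} \to 0.
\end{equation*}
The second encodes the role of the last variable:
\begin{equation*}
  0 \to I_{d,n-1}(-1) \to I_{d-1,n-1}(-1) \to I_{d,n}/I_{d,n-1} \to 0,
\end{equation*}
whose right map sends $m \mapsto x_n m \bmod I_{d,n-1}$. Surjectivity is clear, since $I_{d,n}$ is generated by squarefree degree-$d$ monomials partitioned into those not involving $x_n$ (already in $I_{d,n-1}$) and those of the form $x_n m$ with $m \in I_{d-1,n-1}$ squarefree. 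Identifying the kernel hinges on the colon identity $I_{d,n-1} : x_n = I_{d,n-1}$ inside $R_n$, which holds because $R_n/I_{d,n-1} \cong \bigl(A[x_1, \ldots, x_{n-1}]/I_{d,n-1}\bigr)[x_n]$ makes $x_n$ a non-zero-divisor. Combined with the obvious inclusion $I_{d,n-1} \subseteq I_{d-1,n-1}$, this forces the kernel to be $I_{d,n-1}(-1)$.

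By the inductive hypothesis, the $\operatorname{Tor}$ of $I_{d,n-1}$ is concentrated on the line $j = d + i$ and that of $I_{d-1,n-1}$ on $j = (d-1) + i$. After the twist by $-1$ in the second sequence, these two linear strands land in internal degrees $d+i+1$ and $d+i$ respectively, so at each bidegree at most one is nonzero. Consequently the connecting maps in the long exact sequence of $\operatorname{Tor}^{R_n}_\bullet(-, A)$ vanish bidegree-by-bidegree, producing short exact sequences that compute the Betti numbers of $I_{d,n}/I_{d,n-1}$. Applying the same bidegree analysis to the long exact sequence attached to the first short exact sequence yields the recurrence
\begin{equation*}
  \beta_{i, d+i}(I_{d,n}) = \beta_{i, d+i}(I_{d,n-1}) + \beta_{i, d+i-1}(I_{d-1,n-1}) + \beta_{i-1, d+i-1}(I_{d,n-1}),
\end{equation*}
with all off-strand Betti numbers vanishing since the contributing $\operatorname{Tor}$ groups do. Substituting the inductive formulas and applying Pascal's identity twice collapses the right-hand side to $\binom{n}{d+i}\binom{d+i-1}{i}$.

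The main obstacle is verifying the second short exact sequence, in particular that the kernel is exactly $I_{d,n-1}(-1)$ and nothing larger; this reduces to the colon calculation above and is characteristic-free, which is why the whole argument survives over an arbitrary coefficient ring $A$. The remaining delicacy is purely combinatorial: tracking bidegrees carefully so the long exact sequences split into short ones, and organizing the binomial arithmetic so Pascal's rule applies cleanly.
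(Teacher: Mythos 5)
Your proof is correct and takes essentially the same approach as the paper: it rests on the same pair of short exact sequences (yours are related to the paper's by the isomorphism $(I_{d-1,n-1}/I_{d,n-1})\,R_n(-1)\cong I_{d,n}/I_{d,n-1}R_n$ induced by multiplication by $x_n$, together with flat base change along $R_{n-1}\hookrightarrow R_n$), leads to the same recurrence
\begin{equation*}
  \beta_{i,d+i}(I_{d,n})=\beta_{i,d+i}(I_{d,n-1})+\beta_{i,d+i-1}(I_{d-1,n-1})+\beta_{i-1,d+i-1}(I_{d,n-1}),
\end{equation*}
and closes with the same double application of Pascal's identity; the paper organizes this via mapping cones and proves minimality from linearity of the internal degrees, which is the same bidegree analysis you perform on the long exact sequences of $\operatorname{Tor}$. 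One small imprecision worth flagging: in the long exact sequence attached to your second short exact sequence, the connecting map $\operatorname{Tor}_i(I_{d,n}/I_{d,n-1})_{d+i}\to\operatorname{Tor}_{i-1}(I_{d,n-1}(-1))_{d+i}$ does \emph{not} vanish (it is a surjection onto a typically nonzero module); the long exact sequence splits into short ones instead because the flanking terms $\operatorname{Tor}_i(I_{d,n-1}(-1))_{d+i}$ (supported in $j=d+i+1$) and $\operatorname{Tor}_{i-1}(I_{d-1,n-1}(-1))_{d+i}$ (supported in $j=d+i-1$) both vanish at $j=d+i$. The resulting recurrence and its evaluation are unaffected.
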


\begin{proof}
  The proof is by double induction on $n$ and $d$.

  For $n=d=1$, we have $I_{1,1} = (x_1)$, for which the result
  clearly holds.

  For $n>1$, $d=1$, the ideal $I_{1,n} = (x_1,\ldots,x_n)$ is
  minimally resolved by a Koszul complex, whose Betti numbers match
  those in the statement.

  Consider the case $n>1$, $d>1$. There is a short exact sequence
  \begin{equation}
    \label{eq:1}
    0 \to I_{d,n-1} \hookrightarrow I_{d-1,n-1}
    \to I_{d-1,n-1} / I_{d,n-1} \to 0
  \end{equation}
  of $R_{n-1}$-modules, the second arrow being an inclusion.  Let
  $F^{d,n-1}_\bullet$ and $F^{d-1,n-1}_\bullet$ be minimal free
  resolutions of the left and middle modules in \eqref{eq:1}.  The
  inclusion $I_{d,n-1} \hookrightarrow I_{d-1,n-1}$ extends to a map
  of complexes $F^{d,n-1}_\bullet \to F^{d-1,n-1}_\bullet$, whose
  mapping cone $C_\bullet$ is a free resolution of
  $I_{d-1,n-1} / I_{d,n-1}$. By the inductive hypothesis,
  $C_i = F^{d,n-1}_{i-1} \oplus F^{d-1,n-1}_i$ is generated in degree
  $d+i-1$. It follows that $C_\bullet$ is linear and hence minimal.

  The squarefree monomials generating $I_{d,n}$ are of two kinds: the
  ones that are not divisible by $x_n$, and the ones that are. The
  former coincide with the generators of $I_{d,n-1}$, the latter are
  the generators of $I_{d-1,n-1}$ multiplied by $x_n$.
  This leads to the short exact sequence
  \begin{multline}
    \label{eq:2}
    0 \to (I_{d-1,n-1} / I_{d,n-1}) \otimes_{R_{n-1}} R_n (-1)
    \xrightarrow{\cdot x_n}\\ (R_{n-1} / I_{d,n-1}) \otimes_{R_{n-1}} R_n
    \to R_n / I_{d,n} \to 0
  \end{multline}
  of $R_n$-modules, where the second arrow is multiplication by $x_n$.

  Since $R_n$ is a free $R_{n-1}$-module, it is flat. Thus
  $F^{d,n-1}_\bullet \otimes_{R_{n-1}} R_n$ is an exact complex. In
  fact, if we augment it with the obvious map
  $F^{d,n-1}_0 \otimes_{R_{n-1}} R_n \to R_n$, we obtain a minimal
  free resolution of the middle term in \eqref{eq:2}. Similarly,
  $C_\bullet \otimes_{R_{n-1}} R_n (-1)$ is a minimal free resolution
  of the left term. The second map of sequence \eqref{eq:2} extends to
  a map of complexes between the two resolutions just described. The
  mapping cone $D_\bullet$ of this map is a free resolution of
  $R_n / I_{d,n}$. Note $D_0 = R_n$ and, for $i>0$, we have
  \begin{equation}
    \label{eq:3}
    D_i = (C_{i-1} \otimes_{R_{n-1}} R_n (-1))
    \oplus (F^{d,n-1}_{i-1} \otimes_{R_{n-1}} R_n),
  \end{equation}
  so $D_i$ is generated in degree $d+i-1$. Therefore $D_\bullet$ is
  minimal.

  If $F^{d,n}_\bullet$ is a minimal free resolution of $I_{d,n}$, then
  $F^{d,n}_i \cong D_{i+1}$ is generated in degree $d+i$. Moreover,
  equation \eqref{eq:3} gives
  \begin{equation}
    \label{eq:6}
    F^{d,n}_i \cong ((F^{d,n-1}_{i-1} \oplus F^{d-1,n-1}_i)
    \otimes_{R_{n-1}} R_n (-1))
    \oplus (F^{d,n-1}_i \otimes_{R_{n-1}} R_n).
  \end{equation}
  We deduce that
  \begin{equation*}
    \begin{split}
      \beta_{i,d+i} (I_{d,n}) =
      {}&\tbinom{n-1}{d+i-1}\tbinom{d+i-2}{i-1}
      +\tbinom{n-1}{d+i-1}\tbinom{d+i-2}{i}
      +\tbinom{n-1}{d+i}\tbinom{d+i-1}{i}=\\
      ={}&\tbinom{n-1}{d+i-1}\tbinom{d+i-1}{i}
      +\tbinom{n-1}{d+i}\tbinom{d+i-1}{i}=\\
      ={}&\tbinom{n}{d+i}\tbinom{d+i-1}{i}
    \end{split}
  \end{equation*}
  as desired. Clearly all other Betti numbers are zero.
\end{proof}

\begin{example}[$n=4,d=2$]
  The nonzero Betti numbers of $I_{2,4}$ are
  \begin{equation*}
    \beta_{0,2} (I_{2,4}) = 6, \quad
    \beta_{1,3} (I_{2,4}) = 8, \quad
    \beta_{2,4} (I_{2,4}) = 3.
  \end{equation*}
\end{example}

\section{Representations}
\label{sec:representations}

Specht modules are representations of the symmetric group.  Their
construction and properties are discussed in detail in
\cite{MR513828}.  We briefly recall a presentation for Specht modules
following \cite[\S 7.4]{MR1464693}.

Let $A [\mathfrak{S}_n]$ be the group algebra of the symmetric group
$\mathfrak{S}_n$ over $A$.  Let $\lambda$ be a partition of $n$.  A
(Young) tableau of shape $\lambda$ is a filling of the Young diagram
associated to $\lambda$ (we use the English notation for diagrams,
cf.~\cite[p.~2]{MR1354144}). The Specht module $S^\lambda$ is the
$A [\mathfrak{S}_n]$-module generated by the equivalence classes $[T]$
of tableaux of shape $\lambda$ with entries in $[n]=\{1,\ldots,n\}$
modulo the following relations.
\begin{description}[wide]
\item[Alternating columns] $\sigma [T] = \sgn (\sigma) [T]$ for all
  $\sigma \in \mathfrak{S}_n$ preserving the columns of $T$
  ($\sgn (\sigma)$ denotes the sign of $\sigma$).
\item[Shuffling relations] $[T]=\sum [T']$, where the sum is over all $T'$
  obtained from $T$ by exchanging the top $k$ elements of the
  $(j+1)$-st column of $T$ with $k$ elements in the $j$-th column of
  $T$, preserving the vertical order of each set of $k$ elements.
\end{description}

We are primarily interested in hook partitions, i.e., partitions of
the form $(d,1^i)$. We illustrate the relations above with an example
using a hook partition.

\begin{example}
  Consider the hook partition $(3,1,1)$ of $5$.  Since columns are
  alternating in $S^{(3,1,1)}$, we have
  \ytableausetup{smalltableaux,centertableaux}
  \begin{equation*}
    \Bigg[\ytableaushort{215,3,4}\Bigg] = -\Bigg[\ytableaushort{315,2,4}\Bigg] =
    \Bigg[\ytableaushort{315,4,2}\Bigg].
  \end{equation*}
  Using the shuffling relation involving the first two columns, we get
  \begin{equation*}
    \Bigg[\ytableaushort{215,3,4}\Bigg] = \Bigg[\ytableaushort{125,3,4}\Bigg] +
    \Bigg[\ytableaushort{235,1,4}\Bigg] + \Bigg[\ytableaushort{245,3,1}\Bigg],
  \end{equation*}
  whereas the one involving the second and third columns gives
  \begin{equation*}
    \Bigg[\ytableaushort{215,3,4}\Bigg] = \Bigg[\ytableaushort{251,3,4}\Bigg].
  \end{equation*}
\end{example}

Let us recall some facts from the theory of Specht modules.
\begin{itemize}[wide]
\item For all $\lambda$, $S^\lambda$ is a free $A$-module.  Its rank
  can be computed with the hook length formula (cf.~\cite[Theorem
  20.1]{MR513828}). In particular, in the case of a hook, we have
  \begin{equation}
    \label{eq:4}
    \rank ( S^{(d,1^i)} ) =
    \tfrac{(d+i)!}{(d+i)(d-1)!i!} = \tbinom{d+i-1}{i}.
  \end{equation}
\item A standard tableau is one whose rows are strictly increasing
  from left to right, and whose columns are strictly increasing from
  top to bottom. If $\lambda$ is a partition of $n$, then the
  equivalence classes of standard tableaux of shape $\lambda$ with
  entries in $[n]$ form an $A$-basis of $S^\lambda$.
\end{itemize}
We will denote by $\SYT (\lambda,[n])$ the set of standard tableaux of
shape $\lambda$ with entries in $[n]$. Note that $\SYT (\lambda,[n])$
is defined even when $\lambda$ is a partition of some non negative
integer $m\neq n$.

\begin{definition}
  Let $n$, $d$, and $i$ be integers. For $1\leq d\leq n$ and
  $0\leq i\leq n-d$, we define an $A[\mathfrak{S}_n]$-module by
  setting
  \begin{equation*}
    U^{d,n}_i :=
    \operatorname{Ind}^{\mathfrak{S}_n}_{\mathfrak{S}_{d+i} \times
      \mathfrak{S}_{n-d-i}} ( S^{(d,1^i)} \otimes_A S^{(n-d-i)} ),
  \end{equation*}
  where the right hand side of the assignment is the
  $A[\mathfrak{S}_n]$-module induced by the
  $A[\mathfrak{S}_{d+i} \times \mathfrak{S}_{n-d-i}]$-module
  $S^{(d,1^i)} \otimes_A S^{(n-d-i)}$.  The group
  $\mathfrak{S}_{d+i} \times \mathfrak{S}_{n-d-i}$ is realized as the
  subgroup of permutations of $\mathfrak{S}_n$ that preserve the
  subsets $\{1,\ldots,d+i\}$ and $\{d+i+1,\ldots,n\}$. If any of the
  inequalities involving $n$, $d$, and $i$ are violated, we set
  $U^{d,n}_i:=0$.
\end{definition}
The modules $U^{d,n}_i$ will play a central role in the rest of this
paper.  We record some properties of the modules $U^{d,n}_i$.

\begin{proposition}
  \label{pro:1}
  Let $n$, $d$, and $i$ be integers with $1\leq d\leq n$, and
  $0\leq i\leq n-d$.
  \begin{enumerate}[label=(\alph*),wide]
  \item\label{item:1} The module $U^{d,n}_i$ is a free $A$-module and
    \begin{equation*}
      \rank (U^{d,n}_i) = \tbinom{n}{d+i}\tbinom{d+i-1}{i}.
    \end{equation*}
  \item\label{item:2} The module $U^{d,n}_i$ is isomorphic to the
    $A[\mathfrak{S}_n]$-module generated by the equivalence classes of
    tableaux of shape $(d,1^i)$ with entries in $[n]$ modulo
    alternating columns and shuffling relations. The equivalence
    classes of standard tableaux form an $A$-basis of $U^{d,n}_i$.
  \item\label{item:3} The module $U^{d,n}_i$ is a principal
    $A[\mathfrak{S}_n]$-module generated by the equivalence class of
    any tableau of shape $(d,1^i)$ with entries in $[n]$.
  \end{enumerate}
\end{proposition}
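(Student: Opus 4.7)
The plan is to address the three parts in order, since (b) uses (a) and (c) uses (b).

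For part (a), the argument is short bookkeeping: the subgroup $\mathfrak{S}_{d+i}\times\mathfrak{S}_{n-d-i}$ has index $\binom{n}{d+i}$ in $\mathfrak{S}_n$, so induction multiplies the $A$-rank by this number. The Specht module $S^{(n-d-i)}$ is the trivial representation of rank $1$, while $\rank S^{(d,1^i)}=\binom{d+i-1}{i}$ by \eqref{eq:4}. Multiplying the three factors yields the claimed rank, and freeness is automatic because induction of a free $A$-module is free.

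For part (b), I would construct a comparison map and compare ranks. Let $V^{d,n}_i$ denote the $A[\mathfrak{S}_n]$-module of tableaux of shape $(d,1^i)$ with entries in $[n]$ modulo the two families of relations. A tableau with entries in $[d+i]$ is in particular a tableau with entries in $[n]$, giving an $A[\mathfrak{S}_{d+i}]$-linear map from the tableau presentation of $S^{(d,1^i)}$ into $V^{d,n}_i$; extending this by the trivial $\mathfrak{S}_{n-d-i}$-factor $S^{(n-d-i)}$ and invoking the universal property of induction, I obtain an $A[\mathfrak{S}_n]$-homomorphism $\varphi\colon U^{d,n}_i\to V^{d,n}_i$. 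Surjectivity follows since any tableau with entries in $[n]$ is of the form $\sigma T_0$ for some fixed tableau $T_0$ with entries in $[d+i]$ and some $\sigma\in\mathfrak{S}_n$. For the basis statement, I would partition the generators of $V^{d,n}_i$ according to the $(d+i)$-subset of entries used: for each such subset, the tableaux with entries in it modulo the relations form a copy of $S^{(d,1^i)}$, with standard fillings as an $A$-basis. Summing over all $\binom{n}{d+i}$ subsets gives $\binom{n}{d+i}\binom{d+i-1}{i}$ classes of standard tableaux, agreeing with the rank of $U^{d,n}_i$ from part (a); since $\varphi$ is surjective between free modules of equal rank, it is an isomorphism, simultaneously proving the presentation and the standard-tableau basis.

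For part (c), observe that $\mathfrak{S}_n$ acts transitively on tableaux of shape $(d,1^i)$ with entries in $[n]$: given any two such tableaux $T$ and $T'$, the permutation sending the $k$-th entry of $T$ to the $k$-th entry of $T'$ (in any fixed reading order) carries $T$ to $T'$, and this action descends to equivalence classes. By part (b), $U^{d,n}_i$ is the $A$-span of these classes, and since they form a single $\mathfrak{S}_n$-orbit, any single $[T]$ generates $U^{d,n}_i$ as an $A[\mathfrak{S}_n]$-module.

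The main obstacle is part (b): the subtle point is confirming that $\varphi$ is well-defined, which requires that the alternating column and shuffling relations defining $S^{(d,1^i)}$ on $[d+i]$ lift correctly to the analogous relations in $V^{d,n}_i$ on $[n]$, and that the trivial factor $S^{(n-d-i)}$ interacts correctly with the fact that $\mathfrak{S}_{n-d-i}$ fixes any tableau whose entries are disjoint from $\{d+i+1,\ldots,n\}$. Both checks are routine once the set-up is in place, and the matching rank count from part (a) then closes the argument.
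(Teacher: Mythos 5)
Your argument is correct and takes essentially the same approach as the paper. Parts (a) and (c) are identical to the paper's, and in (b) you reach the same key decomposition of the tableau module by $(d+i)$-element entry sets; the paper identifies each coset summand of $U^{d,n}_i$ with the corresponding tableau submodule directly, while you route through the universal property of induction plus a rank count (surjection between free modules of equal rank), which is a harmless but slightly longer path to the same isomorphism.
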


\begin{proof}
  \begin{enumerate}[label=(\alph*),wide]
  \item By definition of induced module, we have
    \begin{equation}\label{eq:5}
      U^{d,n}_i \cong \bigoplus
      \sigma (S^{(d,1^i)} \otimes_A S^{(n-d-i)}),
    \end{equation}
    where the direct sum is over a set of representatives for the
    cosets of $\mathfrak{S}_{d+i}\times \mathfrak{S}_{n-d-i}$ in
    $\mathfrak{S}_n$.  Each summand is a free $A$-module, therefore
    so is $U^{d,n}_i$.  Note that $\rank (S^{(n-d-i)}) = 1$, hence
    \begin{equation*}
      \rank (U^{d,n}_i)
      = \tfrac{|\mathfrak{S}_n|}{|\mathfrak{S}_{d+i}
        \times \mathfrak{S}_{n-d-i}|} \tbinom{d+i-1}{i}
      =\tbinom{n}{d+i}\tbinom{d+i-1}{i}.
    \end{equation*}
  \item Consider the module $S^{(d,1^i)} \otimes_A S^{(n-d-i)}$. The
    factor $S^{(n-d-i)}$ has rank one, hence it does not play a role
    in what follows. The factor $S^{(d,1^i)}$ is generated by the
    equivalence classes of tableaux of shape $(d,1^i)$ with entries in
    $[d+i]$ modulo alternating columns and shuffling relations.
    Moreover, $S^{(d,1^i)}$ has a basis given by the equivalence
    classes of standard tableaux.

    For each coset of $\mathfrak{S}_{d+i} \times \mathfrak{S}_{n-d-i}$
    in $\mathfrak{S}_n$ there is a unique representative $\sigma$
    which is an increasing function on $\{1,\ldots,d+i\}$ and on
    $\{d+i+1,\ldots,n\}$. Denote by $C$ the collection of all coset
    representatives with this property.
    
    Given a representative $\sigma \in C$ and a tableau $T$ of shape
    $(d,1^i)$ with entries in $[d+i]$, we can apply $\sigma$ to all
    entries of $T$ to obtain a tableau $\sigma T$ of shape $(d,1^i)$
    with entries in $\{\sigma(1),\ldots,\sigma(d+i)\}$.  Moreover, if
    $T$ is standard, then $\sigma T$ is again standard because
    $\sigma$ is increasing on $\{1,\ldots,d+i\}$.

    Therefore we can identify the direct summand
    $\sigma (S^{(d,1^i)} \otimes_A S^{(n-d-i)})$ in \eqref{eq:5} with
    the $A$-module generated by equivalence classes of tableaux of
    shape $(d,1^i)$ with entries in $\{\sigma(1),\ldots,\sigma(d+i)\}$
    modulo alternating columns and shuffling relations. In particular,
    the equivalence classes of standard tableaux form an $A$-basis.
    As $\sigma$ runs over $C$, we get the desired result.
  \item Fix a tableau $T$ of shape $(d,1^i)$ with entries in
    $[n]$. Given any other tableau $T'$ of shape $(d,1^i)$ with
    entries in $[n]$, there is a permutation
    $\sigma \in \mathfrak{S}_n$ such that $T'=\sigma T$. Thus we have
    $[T']=\sigma [T]$ in $U^{d,n}_i$. Now the statement follows from
    part \ref{item:2}.
  \end{enumerate}
\end{proof}

\begin{remark}
  If $A$ is a field, then the Littlewood-Richardson rule (see \cite[\S
  16]{MR513828}) implies that the module $U^{d,n}_i$ admits a
  filtration whose associated graded object is $\bigoplus S^\lambda$,
  where the direct sum is over all partitions
  $\lambda=(\lambda_1,\lambda_2,\ldots)$ of $n$ such that
  $\lambda_1 \geq d$, $\lambda_2 \leq d$, and $\lambda_i = 1$ for all
  $i$ with $3\leq i \leq n+2-\lambda_1-\lambda_2$. In particular, note
  that this is a multiplicity-free decomposition in terms of Specht
  modules. The same description can also be obtained using the simpler
  Pieri rule (cf.~\cite[\S 2.2]{MR1464693}). If $A$ is a field of
  characteristic zero, then the same rules give a decomposition of
  $U^{d,n}_i$ as a direct sum of simple $A[\mathfrak{S}_n]$-modules.
\end{remark}

\begin{remark}
  The restriction of $U^{d,n}_i$ to $\mathfrak{S}_{n-1}$ has the
  following interesting property:
  \begin{equation*}
    \operatorname{Res}^{\mathfrak{S}_n}_{\mathfrak{S}_{n-1}}
    ( U^{d,n}_i ) \cong
    U^{d,n-1}_{i-1} \oplus U^{d-1,n-1}_i \oplus U^{d,n-1}_i.
  \end{equation*}
  This is easy to prove using Proposition \ref{pro:1} \ref{item:2}.
  Notice also that this property matches the isomorphism in equation
  \eqref{eq:6}.
\end{remark}

\section{Equivariant resolution}
\label{sec:equiv-struct-1}

In the section, we describe an $\mathfrak{S}_n$-equivariant minimal
free resolution of $I_{d,n}$. The free modules and the differentials
are constructed using the representations $U^{d,n}_i$ introduced in \S
\ref{sec:representations}. As the definitions are very explicit, this
allows for a direct proof of exactness, which is the content of our
main theorem. We conclude this section with different interpretations
of the Betti numbers of $I_{d,n}$.

\subsection{Modules}
\label{sec:modules}

\begin{definition}
  Let $n$, $d$, and $i$ be integers, with $n>0$. We define an
  $R_n [\mathfrak{S}_n]$-module by setting
  \begin{equation*}
    F^{d,n}_i := U^{d,n}_i \otimes_A R_n (-d-i).
  \end{equation*}
\end{definition}

We write $p[T]$ for the simple tensor
$[T]\otimes p \in F^{d,n}_i$, where $p\in R_n$ and $T$ is a tableau of
shape $(d,1^i)$ with entries in $[n]$.

\begin{proposition}
  \label{pro:4}
  Let $n$, $d$, and $i$ be integers, with $1\leq d\leq n$ and
  $0\leq i\leq n-d$.
  \begin{enumerate}[label=(\alph*),wide]
  \item The module $F^{d,n}_i$ is a free $R_n$-module and
    \begin{equation*}
      \rank (F^{d,n}_i) = \tbinom{n}{d+i}\tbinom{d+i-1}{i}.
    \end{equation*}
  \item The module $F^{d,n}_i$ is isomorphic to the
    $R_n [\mathfrak{S}_n]$-module generated by the equivalence classes
    of tableaux of shape $(d,1^i)$ with entries in $[n]$ modulo
    alternating columns and shuffling relations. The equivalence
    classes of standard tableaux form an $R_n$-basis of $F^{d,n}_i$.
  \item The module $F^{d,n}_i$ is a principal
    $R_n [\mathfrak{S}_n]$-module generated by the equivalence class of
    any tableau of shape $(d,1^i)$ with entries in $[n]$.
  \end{enumerate}
\end{proposition}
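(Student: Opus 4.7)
The plan is to obtain each of the three parts as a direct consequence of the corresponding part of Proposition \ref{pro:1}, by extending scalars from $A$ to $R_n$. The key observation is that $R_n$ is a free (hence flat) $A$-module, and that the grading shift $(-d-i)$ in the definition of $F^{d,n}_i$ affects only the grading, not the underlying module structure; so for the purposes of checking freeness, rank, and generators/relations, it can be ignored.

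For part (a), I would pick an $A$-basis of $U^{d,n}_i$ of cardinality $\binom{n}{d+i}\binom{d+i-1}{i}$, which exists by Proposition \ref{pro:1}\ref{item:1}. After tensoring with $R_n$ over $A$, the images of these basis elements form an $R_n$-basis of $U^{d,n}_i \otimes_A R_n$, and twisting by $(-d-i)$ leaves this basis intact. This gives both freeness and the rank formula.

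For part (b), Proposition \ref{pro:1}\ref{item:2} presents $U^{d,n}_i$ as the quotient of the free $A[\mathfrak{S}_n]$-module on the classes $[T]$ of tableaux of shape $(d,1^i)$ with entries in $[n]$, modulo the alternating-column and shuffling relations, with standard tableaux forming an $A$-basis. Applying $-\otimes_A R_n$ preserves this presentation by flatness: the generators become the elements $1\cdot[T]$, the relations are formally identical (they involve only $A$-linear combinations of classes of tableaux), and the standard tableaux remain a basis over $R_n$. This gives the isomorphism with the tableau module and the basis statement. For part (c), Proposition \ref{pro:1}\ref{item:3} says that any $[T]$ generates $U^{d,n}_i$ as an $A[\mathfrak{S}_n]$-module, so after tensoring with $R_n$, the element $1\cdot[T]$ generates $F^{d,n}_i$ as an $R_n[\mathfrak{S}_n]$-module; a general element $p[T']$ is obtained by first acting by some $\sigma\in\mathfrak{S}_n$ taking $[T]$ to $[T']$ and then multiplying by $p\in R_n$.

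There is no real obstacle here: the proposition is essentially a formal corollary of Proposition \ref{pro:1} via base change along the flat inclusion $A\hookrightarrow R_n$. The only thing to be careful about is that the $\mathfrak{S}_n$-action on $F^{d,n}_i = U^{d,n}_i \otimes_A R_n(-d-i)$ is the diagonal one (acting on both tensor factors), but since the induced action on $R_n$ is $A$-linear and homogeneous, this is compatible with all of the statements above.
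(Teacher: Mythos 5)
Your argument is correct and is precisely the one the paper uses: the paper's entire proof is ``Everything follows from the corresponding statements in Proposition~\ref{pro:1},'' and your write-up simply spells out the routine base change $-\otimes_A R_n$ (using flatness and the irrelevance of the grading shift) that the paper leaves implicit.
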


\begin{proof}
  Everything follows from the corresponding statements in Proposition
  \ref{pro:1}.
\end{proof}

\subsection{Differentials}
\label{sec:differentials}

Consider the tableau
\begin{equation*}
  \ytableausetup{boxsize=2em}
  T =
  \begin{ytableau}
    a_0 & b_1 & \ldots & b_{d-1}\\
    a_1\\
    \vdots\\
    a_i
  \end{ytableau}
\end{equation*}
of shape $(d,1^i)$ with entries in $[n]$. For $0\leq j\leq i$, define
$T\setminus a_j$ as the tableau of shape $(d,1^{i-1})$ with entries in
$[n]$ obtained from $T$ by removing the box containing $a_j$ and
sliding upward the boxes below it. For clarity, we have
\begin{equation*}
  \ytableausetup{boxsize=2em}
  T\setminus a_j =
  \begin{ytableau}
    a_0 & b_1 & \ldots & b_{d-1}\\
    a_1\\
    \vdots\\
    a_{j-1}\\
    a_{j+1}\\
    \vdots\\
    a_i
  \end{ytableau}
  \qquad
  T\setminus a_0 =
  \begin{ytableau}
    a_1 & b_1 & \ldots & b_{d-1}\\
    a_2\\
    \vdots\\
    a_i
  \end{ytableau}
\end{equation*}
respectively when $1\leq j\leq i$ and when $j=0$.

\begin{definition}
  \label{def:1}
  Let $n$, $d$, and $i$ be integers, with $1\leq d\leq n$ and
  $1\leq i\leq n-d$.  Fix a tableau $T$ of shape $(d,1^i)$ with
  entries $a_0,\ldots,a_i,b_1,\ldots,b_{d-1}$ in $[n]$ as above.
  Since $F^{d,n}_i$ is a principal $R_n [\mathfrak{S}_n]$-module
  generated by $[T]$, we define a map
  $\partial^{d,n}_i \colon F^{d,n}_i \to F^{d,n}_{i-1}$ of
  $R_n [\mathfrak{S}_n]$-modules by setting
  \begin{equation*}
    \partial^{d,n}_i ([T]) := \sum_{j=0}^i (-1)^{i-j} x_{a_j} [T\setminus a_j]
  \end{equation*}
  and extending by $R_n [\mathfrak{S}_n]$-linearity.
\end{definition}
Note that the same formula for $\partial^{d,n}_i$ holds for any other
tableau $T'$ of shape $(d,1^i)$ with entries in $[n]$.

\begin{example}[$n=4,d=2,i=2$]
  We have, for example,
  \begin{equation*}
    \ytableausetup{smalltableaux,centertableaux}
    \partial^{2,4}_2 \left(\Bigg[\ytableaushort{12,3,4}\Bigg]\right)=
    x_1 \Big[\ytableaushort{32,4}\Big]
    -x_3 \Big[\ytableaushort{12,4}\Big]
    +x_4 \Big[\ytableaushort{12,3}\Big].
  \end{equation*}
  Expressing this in terms of standard tableaux, we get
  \begin{equation*}
    \begin{split}
    \ytableausetup{smalltableaux,centertableaux}
    \partial^{2,4}_2 \left(\Bigg[\ytableaushort{12,3,4}\Bigg]\right)
    &=x_1 \left(\Big[\ytableaushort{23,4}\Big]
      + \Big[\ytableaushort{34,2}\Big]\right)
    -x_3 \Big[\ytableaushort{12,4}\Big]
    +x_4 \Big[\ytableaushort{12,3}\Big] =\\
    &=x_1 \Big[\ytableaushort{23,4}\Big]
    -x_1 \Big[\ytableaushort{24,3}\Big]
    -x_3 \Big[\ytableaushort{12,4}\Big]
    +x_4 \Big[\ytableaushort{12,3}\Big].
  \end{split}
  \end{equation*}
  In fact, the matrix of $\partial^{2,4}_2$ in the bases of standard tableaux is
  \begin{equation*}
    \renewcommand*{\arraystretch}{1.8}
    \begin{blockarray}{cccc}
      \begin{block}{c[ccc]}
        \Big[\ytableaushort{23,4}\Big] & x_1 & x_1 & 0\\
        \Big[\ytableaushort{13,4}\Big] & 0 & -x_2 & 0\\
        \Big[\ytableaushort{12,4}\Big] & -x_3 & 0 & 0\\
        \Big[\ytableaushort{24,3}\Big] & -x_1 & 0 & x_1\\
        \Big[\ytableaushort{14,3}\Big] & 0 & 0 & -x_2\\
        \Big[\ytableaushort{14,2}\Big] & 0 & 0 & x_3\\
        \Big[\ytableaushort{12,3}\Big] & x_4 & 0 & 0\\
        \Big[\ytableaushort{13,2}\Big] & 0 & x_4 & 0\\
      \end{block}
      & \Bigg[\ytableaushort{12,3,4}\Bigg]
      & \Bigg[\ytableaushort{13,2,4}\Bigg]
      & \Bigg[\ytableaushort{14,2,3}\Bigg]\\
    \end{blockarray}
  \end{equation*}
\end{example}

\begin{proposition}
  Let $n$ and $d$ be integers with $1\leq d\leq n$.  The sequence of
  maps $\partial^{d,n}_i$ forms a minimal complex of free
  $R_n$-modules.
\end{proposition}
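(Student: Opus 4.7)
The proposition asks us to verify three things: that the formula of Definition \ref{def:1} gives a well-defined $R_n[\mathfrak{S}_n]$-linear map, that $\partial^{d,n}_{i-1}\circ\partial^{d,n}_i = 0$, and that the resulting complex is minimal. Minimality is the easiest part: every term in the formula has a coefficient of the form $\pm x_{a_j}$, so the matrix of $\partial^{d,n}_i$ in the standard-tableau basis of Proposition \ref{pro:4} has all entries in the graded maximal ideal $(x_1,\ldots,x_n)$ of $R_n$.

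For well-definedness, the plan is to declare the formula on the free $R_n$-module spanned by \emph{all} symbols $[T]$ for tableaux $T$ of shape $(d,1^i)$ with entries in $[n]$, and then verify that both families of relations presenting $F^{d,n}_i$ (alternating columns and shuffling, by Proposition \ref{pro:4}) are sent into the corresponding relations of $F^{d,n}_{i-1}$. For the alternating column relation it suffices to treat a transposition $\tau$ of two first-column entries $a_j,a_k$ with $j<k$: for $l \notin \{j,k\}$ the transposition still swaps two first-column entries of $T \setminus a_l$, so the alternating column relation in $F^{d,n}_{i-1}$ supplies the required sign; for $l \in \{j,k\}$ the removal of the swapped entry induces a cyclic shift of the first column whose sign can be computed directly and matches the one expected from $\tau$. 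The shuffling relation is verified analogously, by expanding both sides, reindexing, and invoking the shuffling relation in $F^{d,n}_{i-1}$ for each $T \setminus a_l$.

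For $\partial^{d,n}_{i-1}\partial^{d,n}_i([T]) = 0$, I would expand the composition on a generator and group terms of the form $x_{a_j}x_{a_k}[T\setminus\{a_j,a_k\}]$ for $j \neq k$. When $1 \leq j < k$, removing $a_j$ first shifts $a_k$ to position $k-1$ in $T\setminus a_j$, giving total sign $(-1)^{i-j}(-1)^{(i-1)-(k-1)} = (-1)^{j+k}$; removing $a_k$ first (so that $a_j$ stays at position $j$ in $T \setminus a_k$) gives $(-1)^{i-k}(-1)^{(i-1)-j} = (-1)^{j+k+1}$; these are opposite and so the pair cancels. The corner case $j = 0$, where $a_0$ sits at the top of the first column, requires a slightly different but analogous sign computation that also yields cancellation.

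I expect the main obstacle to be the verification of the shuffling relation in the well-definedness step: it requires careful combinatorial bookkeeping of how the terms produced by $\partial^{d,n}_i$ reorganize into shuffling relations inside $F^{d,n}_{i-1}$. The alternating column check and the $\partial^2=0$ computation are, by comparison, straightforward sign manipulations.
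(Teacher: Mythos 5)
Your proposal is correct and considerably more detailed than the paper's own proof, which is a two-line remark: the $\partial^2=0$ computation is said to be ``essentially the same as the one for differentials in a Koszul complex,'' and minimality is declared ``obvious.'' Your sign calculation for $\partial^{d,n}_{i-1}\partial^{d,n}_i=0$ is exactly that Koszul-type cancellation, and you carry it out correctly, including the $j=0$ corner case where removing the hook's corner box changes both the first row and the first column (the two orders of removal still land on the same tableau, and the signs $(-1)^k$ versus $(-1)^{k+1}$ cancel). Your minimality argument coincides with the paper's implicit one. The genuine added value in your write-up is the attention to well-definedness: the paper defines $\partial^{d,n}_i$ by its value on a single generator $[T]$ of the principal $R_n[\mathfrak{S}_n]$-module $F^{d,n}_i$ and ``extends by linearity,'' which tacitly assumes that the prescribed image is annihilated by the annihilator of $[T]$ --- equivalently, that the formula applied to the free module on all tableaux kills the alternating-column and shuffling relations of Proposition \ref{pro:4}. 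You correctly flag this and outline how to check it; you honestly defer the shuffling-relation bookkeeping, which is the only part not fully written out. One small simplification you might note: for a hook $(d,1^i)$ the only column with more than one box is the first, so the alternating-column check reduces to transpositions of first-column entries, and the only nontrivial shuffling relations are between columns $1$ and $2$ --- this trims what actually has to be verified.
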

\begin{proof}
  The computation showing that
  $\partial^{d,n}_i \partial^{d,n}_{i+1} =0$ is essentially the same
  as the one for differentials in a Koszul complex. Minimality is
  obvious from the definition of $\partial^{d,n}_i$.
\end{proof}

\begin{definition}
  Let $n$ and $d$ be integers with $1\leq d\leq n$. We denote by
  $F^{d,n}_\bullet$ the complex defined by the sequence of maps
  $\partial^{d,n}_i$.
\end{definition}

\begin{example}[$n=4,d=2$]
  \label{exa:1}
  As $R_4$-modules, we have $F^{2,4}_0 \cong R_4 (-2)^6$,
  $F^{2,4}_1 \cong R_4 (-3)^8$, and $F^{2,4}_2 \cong R_4 (-4)^3$.  The
  complex $F^{2,4}_\bullet$ looks as follows.
  \begin{equation*}
    F^{2,4}_0
    \xleftarrow{
      \left[
      \begin{smallmatrix}
        -x_2 & -x_1 & 0 & -x_2 & -x_1 & 0 & 0 & 0\\
        0 & 0 & -x_1 & x_3 & 0 & -x_1 & 0 & 0\\
        0 & 0 & 0 & 0 & x_3 & x_2 & 0 & 0\\
        x_4 & 0 & 0 & 0 & 0 & 0 & -x_1 & -x_1\\
        0 & x_4 & 0 & 0 & 0 & 0 & 0 & x_2\\
        0 & 0 & x_4 & 0 & 0 & 0 & x_3 & 0
      \end{smallmatrix}
      \right]
    }
    F^{2,4}_1
    \xleftarrow{
      \left[
        \begin{smallmatrix}
          x_1 & x_1 & 0\\
          0 & -x_2 & 0\\
          -x_3 & 0 & 0\\
          -x_1 & 0 & x_1\\
          0 & 0 & -x_2\\
          0 & 0 & x_3\\
          x_4 & 0 & 0\\
          0 & x_4 & 0
        \end{smallmatrix}
      \right]
    }
    F^{2,4}_2
  \end{equation*}
\end{example}

\begin{remark}
  \label{rem:1}
  By Theorem \ref{thm:1}, the complex $F^{d,n}_\bullet$ has the same
  Betti numbers as a minimal free resolution of $I_{d,n}$.
\end{remark}

\subsection{Exactness}
\label{sec:exactness}

\begin{lemma}
  \label{lem:1}
  Let $\varphi^{d,n}_i$ denote
  $(\partial^{d,n}_i)_{d+i} \colon (F^{d,n}_i)_{d+i} \to
  (F^{d,n}_{i-1})_{d+i}$, i.e., the restriction of $\partial^{d,n}_i$ to
  degree $d+i$. Then $\varphi^{d,n}_i$ admits a left inverse.
\end{lemma}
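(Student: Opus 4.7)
The plan is to build an explicit $A$-linear left inverse $\psi\colon (F^{d,n}_{i-1})_{d+i}\to (F^{d,n}_i)_{d+i}$ by specifying its values on a basis. By Proposition \ref{pro:4}, the module $(F^{d,n}_i)_{d+i}$ is free over $A$ on the classes of standard tableaux of shape $(d,1^i)$ with entries in $[n]$, while $(F^{d,n}_{i-1})_{d+i}\cong U^{d,n}_{i-1}\otimes_A(R_n)_1$ is free over $A$ on the elements $x_k[T']$ with $k\in[n]$ and $T'\in\SYT((d,1^{i-1}),[n])$.

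On this basis I would define $\psi$ by the following rule: if $k$ is strictly larger than every entry in the first column of $T'$ and $k$ is not an entry of $T'$, let $T_k$ denote the tableau obtained by appending $k$ as a new bottom box of the first column of $T'$, and set $\psi(x_k[T']):=[T_k]$; otherwise set $\psi(x_k[T']):=0$. Under the first condition $T_k$ is itself a standard tableau of shape $(d,1^i)$, so $\psi$ does land where it should.

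To verify $\psi\circ\varphi^{d,n}_i=\operatorname{id}$, I take a standard $T$ with first column $a_0<\cdots<a_i$ and first row $a_0<b_1<\cdots<b_{d-1}$ and expand
\begin{equation*}
\varphi^{d,n}_i([T])=\sum_{j=0}^{i}(-1)^{i-j}\,x_{a_j}\,[T\setminus a_j].
\end{equation*}
For $1\leq j\leq i$ the tableau $T\setminus a_j$ is already standard, while for $j=0$ it may fail to be so (precisely when $a_1>b_1$) and must first be rewritten via the shuffling relations. The key observation is that straightening only permutes the existing entries, so every standard tableau $T^*$ appearing in the expansion of $[T\setminus a_j]$ has entries contained in $\{a_0,\ldots,\widehat{a_j},\ldots,a_i,b_1,\ldots,b_{d-1}\}$. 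Now I handle the three cases: when $j=i$ the first column of $T\setminus a_i$ is $a_0,\ldots,a_{i-1}$, all less than $a_i$, so $\psi$ contributes $(-1)^0[T_{a_i}]=[T]$; when $1\leq j\leq i-1$ the first column of $T\setminus a_j$ still ends with $a_i>a_j$, so $\psi$ vanishes on that term; when $j=0$, every entry appearing in the straightened expansion is strictly greater than $a_0$ (because $a_0$ is the minimum entry of $T$), so again $\psi$ vanishes. Summing yields $\psi(\varphi^{d,n}_i([T]))=[T]$.

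The only delicate point is the $j=0$ summand, which may require straightening into the standard basis. My argument sidesteps any detailed manipulation of the shuffling relations by relying on the weak fact that straightening preserves the entry set; combined with $a_0$ being the overall minimum of $T$, this forces $\psi$ to annihilate every contribution from that summand, and the lemma follows.
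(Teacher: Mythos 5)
Your proof is correct and follows essentially the same approach as the paper. Where the paper identifies a subset $\mathcal{C}'$ of the codomain basis (the elements $x_{a_i}[T\setminus a_i]$ for $T$ standard) and shows the corresponding square submatrix of $\varphi^{d,n}_i$ is a permutation matrix, you package the same computation as an explicit left inverse $\psi$ that projects onto $\mathcal{C}'$ and inverts the evident bijection with $\mathcal{B}$; the case analysis on $j$ (with the $j=0$ term handled by noting that straightening preserves the underlying entry set, all of whose elements exceed $a_0$) is the same argument the paper gives for why the column of the submatrix $N$ indexed by $[T]$ has a single nonzero entry equal to $1$.
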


Before proving the lemma, we illustrate $\varphi^{d,n}_i$ with an
example.

\begin{example}[$n=3,d=2,i=1$]
  We write an explicit matrix for $\varphi^{2,3}_1$.  Note that the
  domain of $\varphi^{2,3}_1$ is isomorphic to $U^{2,3}_1$.  On the
  other hand, the codomain of $\varphi^{2,3}_1$ is isomorphic to
  $U^{2,3}_0 \otimes_A (R_3)_1$.  For the domain, we choose the
  $A$-basis given by $[T]$, where $T \in \SYT ((2,1),[3])$. For the
  codomain, we choose the $A$-basis given by elements $x_j [T']$,
  where $x_j$ is a variable in $R_n$ and $T' \in \SYT ((2),[3])$.
  \begin{equation*}
    \renewcommand*{\arraystretch}{1.5}
    \begin{blockarray}{ccc}
      \begin{block}{c[cc]}
        x_1 [\ytableaushort{23}] & -1 & -1\\{}
        x_2 [\ytableaushort{23}] & 0 & 0\\{}
        x_3 [\ytableaushort{23}] & 0 & 0\\{}
        x_1 [\ytableaushort{13}] & 0 & 0\\{}
        x_2 [\ytableaushort{13}] & 0 & 1\\{}
        x_3 [\ytableaushort{13}] & 0 & 0\\{}
        x_1 [\ytableaushort{12}] & 0 & 0\\
        x_2 [\ytableaushort{12}] & 0 & 0\\
        x_3 [\ytableaushort{12}] & 1 & 0\\
      \end{block}
      & \Big[\ytableaushort{12,3}\Big]
      & \Big[\ytableaushort{13,2}\Big]\\
    \end{blockarray}
  \end{equation*}
  Note that the submatrix on the fifth and ninth row from the top is a
  permutation matrix, hence invertible over $A$. Therefore
  $\varphi^{2,3}_1$ admits a left inverse.
\end{example}

\begin{proof}
  Observe that $(F^{d,n}_i)_{d+i} \cong U^{d,n}_i$. By Proposition
  \ref{pro:1}, $U^{d,n}_i$ is a free $A$-module with a basis
  $\mathcal{B}$ consisting of equivalence classes $[T]$ for
  $T\in \SYT ((d,1^i),[n])$.  Similarly,
  $(F^{d,n}_{i-1})_{d+i} \cong U^{d,n}_{i-1} \otimes_A (R_n)_1$ is a
  free $A$-module with a basis $\mathcal{C}$ consisting of elements
  $x_j [T']$, where $x_j$ is a variable in $R_n$ and
  $T'\in \SYT((d,1^{i-1}),[n])$.  Let $M$ be the matrix of
  $\varphi^{d,n}_i$ relative to the bases $\mathcal{B}$ and
  $\mathcal{C}$. We will show that $M$ admits a left inverse by
  selecting some rows giving a submatrix of $M$ which is invertible
  over $A$.

  Define the set
  \begin{equation*}
    \mathcal{C}' :=
    \{x_{a_i} [T\setminus a_i] \in \mathcal{C} \mid T \in \SYT ((d,1^i),[n])\}.
  \end{equation*}
  The elements $\mathcal{C}'$ are obtained from tableaux on $(d,1^i)$
  by removing the bottom box of the first column and multiplying by
  the variable with index the entry of the removed box.  The
  cardinalities of $\mathcal{B}$, $\mathcal{C}$, and $\mathcal{C}'$
  are related by the formula
  $|\mathcal{B}|=|\mathcal{C'}| < |\mathcal{C}|$. In particular, this
  shows that $M$ always has more rows than columns.  Let $N$ be the
  square submatrix of $M$ on the rows corresponding to elements of
  $\mathcal{C}'$.

  Given $[T] \in \mathcal{B}$, we have
  \begin{equation*}
    \partial^{d,n}_i ([T]) = \sum_{j=0}^i (-1)^{i-j}
    x_{a_j} [T\setminus a_j].
  \end{equation*}
  For $1\leq j\leq i$, the elements $x_{a_j} [T\setminus a_j]$ are in
  $\mathcal{C}$. The only one belonging to $\mathcal{C}'$ is
  $x_{a_i} [T\setminus a_i]$, which appears with coefficient 1. If
  $T\setminus a_0$ is standard, then $x_{a_0} [T\setminus a_0]$ is in
  $\mathcal{C}$ but not in $\mathcal{C}'$.  If $T\setminus a_0$ is not
  standard, then $x_{a_0} [T\setminus a_0]$ can be expanded into an
  $A$-linear combination of basis elements $x_{a_0} [T']$ for certain
  tableaux $T' \in \SYT((d,1^{i-1}),[n])$. Note that the entries of
  any such $T'$ form a subset of
  $\{a_1,\ldots,a_i,b_1,\ldots,b_{d-1}\}$; in particular, all entries
  of $T'$ are bigger than $a_0$. This implies that $x_{a_0} [T']$ is
  not in $\mathcal{C}'$.  We deduce that the column of $N$
  corresponding to $[T]$ has a single nonzero entry, and this entry is
  is equal to 1. Therefore $N$ is a permutation matrix, hence it is
  invertible over $A$.
\end{proof}

\begin{theorem}\label{thm:2}
  Let $n$ and $d$ be integers with $1\leq d\leq n$.  The complex
  $F^{d,n}_\bullet$ is an $\mathfrak{S}_n$-equivariant minimal free
  resolution of the $R_n$-module $I_{d,n}$.
\end{theorem}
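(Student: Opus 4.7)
The plan is to equip $F^{d,n}_\bullet$ with a natural augmentation and then identify the augmented complex with a minimal graded free resolution of $I_{d,n}$. First, define $\epsilon\colon F^{d,n}_0 \twoheadrightarrow I_{d,n}$ by sending $[b_1\cdots b_d]$ to $x_{b_1}\cdots x_{b_d}$; this is well-defined because single-row tableaux carry only trivial alternating-column relations and the shuffling relations force the class to depend only on the underlying set of entries. Surjectivity and $\mathfrak{S}_n$-equivariance are clear, and a one-line Koszul-style calculation on a generator of shape $(d,1)$ gives $\epsilon\circ\partial^{d,n}_1=0$.

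Next, invoke Theorem~\ref{thm:1} (for instance via its mapping-cone construction) to secure a minimal graded free resolution $G_\bullet$ of $I_{d,n}$ over any coefficient ring $A$. By Theorem~\ref{thm:1} combined with Remark~\ref{rem:1}, each $G_i$ is a free $R_n$-module of rank $\binom{n}{d+i}\binom{d+i-1}{i}$ generated in degree $d+i$, so $F^{d,n}_i$ and $G_i$ have the same rank \emph{and} the same generating degree. Use projectivity to lift the identity on $I_{d,n}$ to a graded chain map $\phi\colon F^{d,n}_\bullet \to G_\bullet$. I will show by induction on $i$ that each $\phi_i$ is an $R_n$-isomorphism. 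Since both modules are free over $R_n$ and generated in the same degree $d+i$, it suffices to prove the $A$-linear restriction $\phi_i|_{d+i}\colon (F^{d,n}_i)_{d+i}\to (G_i)_{d+i}$ is an isomorphism; $\phi_i$ then follows by tensoring with $R_n$. The base case $i=0$ is immediate because $\epsilon^F$ and $\epsilon^G$ both identify the generating degree with $(I_{d,n})_d$.

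For the inductive step, assume $\phi_{i-1}|_{d+i}$ is an $A$-isomorphism. The commutative square obtained by restricting the chain-map identity to degree $d+i$,
\begin{equation*}
\varphi^G_i \circ \phi_i|_{d+i} = \phi_{i-1}|_{d+i} \circ \varphi^{d,n}_i,
\end{equation*}
together with Lemma~\ref{lem:1} (providing an $A$-linear left inverse of $\varphi^{d,n}_i$) and the inductive isomorphism $\phi_{i-1}|_{d+i}$, exhibits the right-hand side as split injective; composing a left inverse of the right-hand side with $\varphi^G_i$ yields a left inverse of $\phi_i|_{d+i}$. Because source and target are free $A$-modules of equal rank, the projective cokernel has rank zero at every prime of $A$ and therefore vanishes, so $\phi_i|_{d+i}$ is an $A$-isomorphism. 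Assembling across $i$ makes $\phi$ a chain isomorphism $F^{d,n}_\bullet \cong G_\bullet$, so $F^{d,n}_\bullet$ is a minimal free resolution of $I_{d,n}$; the $\mathfrak{S}_n$-equivariance is built into the construction.

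The central obstacle is the inductive step: Lemma~\ref{lem:1} supplies an $A$-linear left inverse only in the single lowest degree $d+i$, so upgrading that split injection to a full $R_n$-isomorphism depends crucially on the exact Betti-number count from Theorem~\ref{thm:1}, which pins down the rank and forces the projective cokernel to vanish. This is the point where the explicit Koszul-like structure of $\partial^{d,n}_i$ and the numerical rank match genuinely cooperate.
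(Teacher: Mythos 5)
Your proof is correct, and it takes a genuinely different route from the paper's, although both hinge on the same two ingredients: the split injectivity of $\varphi^{d,n}_i$ from Lemma~\ref{lem:1}, and the exact rank count from Theorem~\ref{thm:1}. The paper never introduces a comparison resolution $G_\bullet$; instead it argues exactness at $F^{d,n}_i$ directly, showing $\ker(\partial^{d,n}_i)=\im(\partial^{d,n}_{i+1})$. Concretely, it restricts the left inverse of $\varphi^{d,n}_{i+1}$ to the degree-$(d+i+1)$ piece of $\ker(\partial^{d,n}_i)$, observes (via the inductive exactness hypothesis and Remark~\ref{rem:1}) that this kernel is generated in degree $d+i+1$ and is free over $A$ of the same rank as $(F^{d,n}_{i+1})_{d+i+1}$, and then uses the ``surjection of free modules of equal rank is an isomorphism'' fact to show $\varphi^{d,n}_{i+1}$ hits everything in that degree, whence $\ker=\im$. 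You substitute for this a chain map $\phi\colon F^{d,n}_\bullet\to G_\bullet$ onto an abstract minimal resolution and show $\phi$ is an isomorphism step by step; the induction hinges on the same split-injection-plus-equal-rank phenomenon, but no longer requires you to argue directly that the syzygy module $\ker(\partial^{d,n}_i)$ is generated in degree $d+i+1$. What you gain is cleaner bookkeeping (you never have to analyze a kernel module, only graded pieces of free modules); what you pay for is a slightly heavier setup (existence of $G_\bullet$, graded comparison theorem for the lift $\phi$), all of which does hold here because $R_n$ is graded with degree-zero part $A$. A minor presentational point: your inductive hypothesis should be stated as ``$\phi_{i-1}$ is an $R_n$-isomorphism'' (equivalently, $\phi_{i-1}$ restricted to degree $d+i-1$ is an $A$-isomorphism), from which $\phi_{i-1}|_{d+i}$ being an isomorphism follows automatically; stating the hypothesis only for degree $d+i$ obscures what has actually been proved at the previous step.
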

\begin{proof}
  The module $F^{d,n}_0$ has an $R_n$-basis $[T]$ with
  $T\in \SYT ((d),[n])$.  Define a map of $R_n$-modules
  $\partial^{d,n}_0 \colon F^{d,n}_0 \to I_{d,n}$ by sending the
  equivalence class of the tableau
  \begin{equation*}
    \ytableausetup{boxsize=1.5em}
    \begin{ytableau}
      b_1 & \ldots & b_d\\
    \end{ytableau}
  \end{equation*}
  to $x_{b_1} \cdots x_{b_d} \in I_{d,n}$. The map
  $\partial^{d,n}_0$ is clearly
  $\mathfrak{S}_n$-equivariant. Moreover, $\partial^{d,n}_0$ is
  surjective because $I_{d,n}$ is generated by the squarefree
  monomials of degree $d$ in $R_n$.

  Next we show that $\partial^{d,n}_0 \partial^{d,n}_1 = 0$.
  Consider the tableau
  \begin{equation*}
    T = 
    \ytableausetup{boxsize=2em}
    \begin{ytableau}
      a_0 & b_1 & \ldots & b_{d-1}\\
      a_1\\
    \end{ytableau}
  \end{equation*}
  of shape $(d,1)$ with entries in $[n]$. We have
  \begin{equation*}
    \begin{split}
      \partial^{d,n}_0 \partial^{d,n}_1 ([T]) &= \partial^{d,n}_0 (
      -x_{a_0}
      [T\setminus a_0] + x_{a_1} [T\setminus a_1]) =\\
      &=-x_{a_0} x_{a_1} x_{b_1} \cdots x_{b_{d-1}} +x_{a_1} x_{a_0}
      x_{b_1} \cdots x_{b_{d-1}} = 0.
      \end{split}
    \end{equation*}
  Therefore $F^{d,n}_\bullet$ can be extended, via the map
  $\partial^{d,n}_0$, to an $\mathfrak{S}_n$-equivariant complex of
  $R_n$-modules $0 \leftarrow I_{d,n} \leftarrow F^{d,n}_\bullet$,
  which is exact at $I_{d,n}$. If we can show this complex is exact
  everywhere else, then the theorem will follow.  We will prove the
  complex is exact at $F^{d,n}_i$ proceeding by induction on $i$.

  For the base case, let $i=0$. We need to show
  $\ker (\partial^{d,n}_0) = \im (\partial^{d,n}_1)$.  Consider
  $\varphi^{d,n}_1$, the restriction of $\partial^{d,n}_1$ to degree
  $d+1$.  By Lemma \ref{lem:1}, $\varphi^{d,n}_1$ admits a left
  inverse, which is a map of $A$-modules from $(F^{d,n}_0)_{d+1}$ to
  $(F^{d,n}_1)_{d+1}$.  Denote by
  $\psi^{d,n}_1 \colon (\ker (\partial^{d,n}_0))_{d+1} \to
  (F^{d,n}_1)_{d+1}$ the $A$-module map obtained by restricting the
  left inverse of $\varphi^{d,n}_1$ to the degree $d+1$ component of
  $\ker (\partial^{d,n}_0)$ (see the diagram below).
  \begin{center}
      \begin{tikzpicture}
        \matrix (m) [matrix of math nodes, row sep=3em, column
        sep=2em, text height=2ex, text depth=0.25ex]
        { (F^{d,n}_0)_{d+1} & (F^{d,n}_1)_{d+1} \\
          (\ker (\partial^{d,n}_0))_{d+1}& \\ };
        \path[<-,font=\scriptsize] (m-1-1) edge node[auto]
        {$\varphi^{d,n}_1$} (m-1-2) (m-1-2) edge[bend left=15]
        node[auto] {$\psi^{d,n}_1$} (m-2-1);
        \path[right hook->,font=\scriptsize] (m-2-1) edge node[auto]
        {} (m-1-1);
      \end{tikzpicture}    
  \end{center}
  Since
  $\im (\partial^{d,n}_1) \subseteq \ker (\partial^{d,n}_0)$, we have
  $\im (\varphi^{d,n}_1) \subseteq (\ker
  (\partial^{d,n}_0))_{d+1}$. Then we can consider the composition
  $\psi^{d,n}_1 \varphi^{d,n}_1$, which is, by construction, the
  identity map of $(F^{d,n}_1)_{d+1}$.  It follows that $\psi^{d,n}_1$
  is surjective.  Since $\partial^{d,n}_0$ surjects onto $I_{d,n}$,
  Remark \ref{rem:1} implies that $\ker (\partial^{d,n}_0)$ is
  generated in degree $d+1$ and that $(\ker (\partial^{d,n}_0))_{d+1}$
  is a free $A$-module of rank $\binom{n}{d+1} \binom{d}{1}$. At the
  same time, $(F^{d,n}_1)_{d+1}$ is also free of rank
  $\binom{n}{d+1} \binom{d}{1}$. Since $\psi^{d,n}_1$ is a surjection
  between free modules of the same rank, it is an isomorphism (see
  \cite[Chapter 3, Exercise 15]{MR0242802}).  Now $\varphi^{d,n}_1$ is
  the right inverse of $\psi^{d,n}_1$, hence it gives an isomorphism
  of $A$-modules between $(F^{d,n}_1)_{d+1}$ and
  $(\ker (\partial^{d,n}_0))_{d+1}$. We deduce that
  $(\ker (\partial^{d,n}_0))_{d+1} = \im (\varphi^{d,n}_1) = (\im
  (\partial^{d,n}_1))_{d+1}$. Given that $\ker (\partial^{d,n}_0)$ is
  generated in degree $d+1$, we conclude that
  $\ker (\partial^{d,n}_0) = \im (\partial^{d,n}_1)$ as desired.

  For the inductive step, let $i>0$. The proof is essentially the same
  as for the base step. We need to show
  $\ker (\partial^{d,n}_i) = \im (\partial^{d,n}_{i+1})$.  The map
  $\varphi^{d,n}_{i+1}$, the restriction of $\partial^{d,n}_{i+1}$ to
  degree $d+i+1$, admits a left inverse by Lemma \ref{lem:1}.  Denote
  by
  $\psi^{d,n}_{i+1} \colon (\ker (\partial^{d,n}_i))_{d+i+1} \to
  (F^{d,n}_{i+1})_{d+i+1}$ the $A$-module map obtained by restricting
  the left inverse of $\varphi^{d,n}_{i+1}$ to the degree $d+i+1$
  component of $\ker (\partial^{d,n}_i)$ (see the diagram below).
  \begin{center}
      \begin{tikzpicture}
        \matrix (m) [matrix of math nodes, row sep=3em, column
        sep=2em, text height=2ex, text depth=0.25ex]
        { (F^{d,n}_i)_{d+i+1} & (F^{d,n}_{i+1})_{d+i+1} \\
          (\ker (\partial^{d,n}_i))_{d+i+1}& \\ };
        \path[<-,font=\scriptsize] (m-1-1) edge node[auto]
        {$\varphi^{d,n}_{i+1}$} (m-1-2) (m-1-2) edge[bend left=15]
        node[auto] {$\psi^{d,n}_{i+1}$} (m-2-1);
        \path[right hook->,font=\scriptsize] (m-2-1) edge node[auto]
        {} (m-1-1);
      \end{tikzpicture}    
  \end{center}
  Since
  $\im (\varphi^{d,n}_{i+1}) \subseteq (\ker
  (\partial^{d,n}_i))_{d+i+1}$, the composition
  $\psi^{d,n}_{i+1} \varphi^{d,n}_{i+1}$ is the identity map of
  $(F^{d,n}_{i+1})_{d+i+1}$.  It follows that $\psi^{d,n}_{i+1}$ is
  surjective. By induction,
  $\im (\partial^{d,n}_i) = \ker (\partial^{d,n}_{i-1})$. Hence Remark
  \ref{rem:1} implies that $\ker (\partial^{d,n}_i)$ is generated in
  degree $d+i+1$ and that $(\ker (\partial^{d,n}_i))_{d+i+1}$ is a
  free $A$-module of rank $\binom{n}{d+i+1} \binom{d+i}{i+1}$.  Note
  that $(F^{d,n}_{i+1})_{d+i+1}$ is also free of rank
  $\binom{n}{d+i+1} \binom{d+i}{i+1}$. Being a surjection between free
  modules of the same rank, $\psi^{d,n}_{i+1}$ is an isomorphism.
  Thus $\varphi^{d,n}_{i+1}$ gives an isomorphism between
  $(F^{d,n}_{i+1})_{d+i+1}$ and $(\ker
  (\partial^{d,n}_i))_{d+i+1}$. It follows that
  $(\ker (\partial^{d,n}_i))_{d+i+1} = \im (\varphi^{d,n}_{i+1}) =
  (\im (\partial^{d,n}_{i+1}))_{d+i+1}$. Given that
  $\ker (\partial^{d,n}_i)$ is generated in degree $d+i+1$, we
  conclude that $\ker (\partial^{d,n}_i) = \im (\partial^{d,n}_{i+1})$
  as desired.
\end{proof}

The following corollary gives a representation theoretic description
of the syzygy modules, and a combinatorial interpretation of the Betti
numbers of $I_{d,n}$. The proof follows directly from Theorem
\ref{thm:2}.
\begin{corollary}
  \label{cor:1}
  Let $n$ and $d$ be integers with $1\leq d\leq n$.
  \begin{enumerate}[wide,label=(\alph*)]
  \item There are isomorphisms of $A[\mathfrak{S}_n]$-modules
    \begin{equation*}
      \operatorname{Tor}^{R_n}_i (I_{d,n},A)_j \cong
      \begin{cases}
        U^{d,n}_i,
        &\text{if }j = d+i,\\
        0, &\text{otherwise}.
      \end{cases}
    \end{equation*}
  \item The Betti numbers of $I_{d,n}$ are given by the formula
    \begin{equation*}
      \beta_{i,j} (I_{d,n}) =
      \begin{cases}
        |\SYT ((d,1^i),[n])|,
        &\text{if }j = d+i,\\
        0, &\text{otherwise}.
      \end{cases}
    \end{equation*}
  \end{enumerate}
\end{corollary}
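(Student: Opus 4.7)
The plan is to read off both statements from Theorem \ref{thm:2} by computing $\operatorname{Tor}^{R_n}_i(I_{d,n}, A)$ using the explicit resolution $F^{d,n}_\bullet$.

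For part (a), I would apply $-\otimes_{R_n} A$ to the $\mathfrak{S}_n$-equivariant resolution $F^{d,n}_\bullet$, where $A = R_n/(x_1,\ldots,x_n)$. Since every entry of every differential matrix $\partial^{d,n}_i$ lies in the maximal ideal $(x_1,\ldots,x_n)$ (this is the minimality statement already recorded), the induced differentials in $F^{d,n}_\bullet \otimes_{R_n} A$ vanish identically. Therefore
\begin{equation*}
  \operatorname{Tor}^{R_n}_i(I_{d,n}, A) \cong F^{d,n}_i \otimes_{R_n} A \cong (U^{d,n}_i \otimes_A R_n(-d-i)) \otimes_{R_n} A \cong U^{d,n}_i
\end{equation*}
as $A[\mathfrak{S}_n]$-modules, and this module sits in internal degree $d+i$ because of the twist $(-d-i)$. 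In all other internal degrees the tensor product is zero, giving the case analysis in the statement.

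For part (b), I would combine part (a) with the standard identification of Betti numbers as ranks of Tor modules over $A$, together with the explicit $A$-basis of $U^{d,n}_i$. By Proposition \ref{pro:1}\ref{item:2}, $U^{d,n}_i$ is a free $A$-module whose basis is indexed by $\SYT((d,1^i),[n])$, so
\begin{equation*}
  \beta_{i,d+i}(I_{d,n}) = \rank_A \operatorname{Tor}^{R_n}_i(I_{d,n},A)_{d+i} = \rank_A U^{d,n}_i = |\SYT((d,1^i),[n])|,
\end{equation*}
and all other Betti numbers vanish.

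There is no real obstacle here: everything is a formal consequence of the minimality and equivariance provided by Theorem \ref{thm:2} and of the structural description of $U^{d,n}_i$ in Proposition \ref{pro:1}. The only thing worth flagging is that one should point out that minimality is exactly what guarantees that $\partial^{d,n}_\bullet \otimes_{R_n} A = 0$, so that the Tor modules coincide on the nose with the pieces of the resolution rather than merely being subquotients.
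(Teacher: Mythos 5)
Your proposal is correct and is precisely the argument the paper has in mind: the paper states that Corollary \ref{cor:1} ``follows directly from Theorem \ref{thm:2},'' and your proof simply spells out the standard mechanism (minimality of $F^{d,n}_\bullet$ kills the induced differentials after tensoring with $A$, so $\operatorname{Tor}$ is read off from the terms, and part (b) then follows from the standard-tableau basis of $U^{d,n}_i$ from Proposition \ref{pro:1}\ref{item:2}).
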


\section{Applications}
\label{sec:applications}

\subsection{De Concini-Procesi rings}
\label{sec:de-concini-procesi}

Let $n$ be a fixed positive integer. For each partition $\mu$ of $n$,
there is a De Concini-Procesi ideal
$I_\mu \subset R_n = A[x_1,\ldots,x_n]$, and a corresponding De
Concini-Procesi ring $R_n /I_\mu$. Historically, these ideals and
quotients have been studied in the case when the coefficient ring $A$
is a field, although they can be defined more generally. The original
definition is in \cite{MR629470}, along with an explicit set of
generators. Over the years, other generating sets have been described
that are simpler and/or smaller than the original one; see for example
\cite{MR2448086,MR685425}. Since we are only interested in the case of
hook partitions, we describe the De Concini-Procesi ideal in this case
following \cite[Proposition 3.4]{MR2371263}.

\begin{proposition}
  \label{pro:2}
  Let $\mu = (n-d+1,1^{d-1})$, with $1\leq d\leq n$. Then we have
  \begin{equation*}
    I_\mu = (e_1,\ldots,e_{d-1}) + I_{d,n},
  \end{equation*}
  where $e_i$ is the $i$-th symmetric polynomial.
\end{proposition}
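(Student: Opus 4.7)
The plan is to invoke the Tanisaki presentation of the De Concini-Procesi ideal, which is the form used in \cite[Proposition 3.4]{MR2371263}. For a partition $\mu$ of $n$ with conjugate $\mu'$, define
\[
d_k(\mu) := \mu'_{n-k+1} + \mu'_{n-k+2} + \cdots + \mu'_n
\]
for $k = 1,\ldots,n$. Tanisaki's theorem expresses $I_\mu$ as the ideal generated by all elementary symmetric polynomials $e_r(x_{i_1},\ldots,x_{i_k})$, where $\{i_1,\ldots,i_k\}$ ranges over the $k$-subsets of $[n]$ and $r > k - d_k(\mu)$. Granting this presentation, the task reduces to a combinatorial identification of the generating set in the hook case.

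First I would conjugate $\mu=(n-d+1,1^{d-1})$ to obtain the hook $\mu'=(d,1^{n-d})$, and then read off from the shape of $\mu'$ that $d_k(\mu)=0$ for $k<d$, that $d_k(\mu)=k-d+1$ for $d\leq k<n$, and that $d_n(\mu)=n$ (the last value being the only one that picks up the entry $\mu'_1=d$). Substituting these into the inequality $r>k-d_k(\mu)$ partitions the Tanisaki generators into four families: no generator appears when $k<d$; when $k=d$ the inequality forces $r=d$, producing exactly the squarefree monomials $x_{i_1}\cdots x_{i_d}$ that generate $I_{d,n}$; when $d<k<n$ we obtain the partial elementary symmetrics $e_r(x_{i_1},\ldots,x_{i_k})$ with $r\geq d$; and when $k=n$ we obtain the full elementary symmetrics $e_1,e_2,\ldots,e_n$.

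The second step is to match these generators against $(e_1,\ldots,e_{d-1})+I_{d,n}$. Every monomial appearing in any $e_r(x_S)$ with $r\geq d$ is a squarefree monomial of degree $r$, hence divisible by a squarefree monomial of degree $d$, and therefore lies in $I_{d,n}$; this absorbs all generators with $r\geq d$, including $e_d,\ldots,e_n$ from the $k=n$ family. The only Tanisaki generators that are not visibly in $I_{d,n}$ are $e_1,\ldots,e_{d-1}$, and these are exactly the extra piece in the claimed description. This gives $I_\mu\subseteq (e_1,\ldots,e_{d-1})+I_{d,n}$. The reverse inclusion is immediate, since $e_1,\ldots,e_{d-1}$ and the degree-$d$ squarefree monomials are themselves among the Tanisaki generators. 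The only real obstacle is careful bookkeeping when computing $d_k(\mu)$ from the conjugate hook $\mu'=(d,1^{n-d})$, with particular attention to the boundary case $k=n$, where $\mu'_1=d$ enters the sum; once the values of $d_k(\mu)$ are in hand, the remaining argument is just a divisibility observation combined with the degree structure of $I_{d,n}$.
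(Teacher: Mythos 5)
The paper does not prove this proposition; it simply states it, citing Proposition 3.4 of Biagioli--Faridi--Rosas \cite{MR2371263}. Your argument via the Tanisaki generating set is correct and is essentially the proof given in that reference: the values $d_k(\mu)$ computed from the conjugate hook $\mu'=(d,1^{n-d})$ are accurate, and the reduction correctly hinges on the observation that every $e_r(x_S)$ with $r\geq d$ already lies in $I_{d,n}$ because each of its monomials is a squarefree monomial of degree at least $d$.
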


In Proposition \ref{pro:2}, we recognize our monomial ideal
$I_{d,n}$. The elementary symmetric polynomials making up the rest of
the ideal have another useful property first presented in
\cite[Proposition 4.3]{MR2371263}.
\begin{proposition}
  \label{pro:3}
  Let $n$ and $d$ be integers with $1\leq d\leq n$. The residue
  classes of $e_1,\ldots,e_{d-1}$ in $R_n /I_{d,n}$ form a regular
  sequence.
\end{proposition}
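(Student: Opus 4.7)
My plan is to deduce regularity of the sequence $e_1,\ldots,e_{d-1}$ on $R_n/I_{d,n}$ from Cohen--Macaulayness of the quotient together with a system-of-parameters check, invoking the standard fact that a homogeneous system of parameters in a graded Cohen--Macaulay ring is automatically a regular sequence. To compute the Krull dimension, I would identify the minimal primes of $I_{d,n}$ as the coordinate primes generated by each subset of $n-d+1$ of the variables $x_1,\ldots,x_n$ (reflecting that $I_{d,n}$ is the Stanley--Reisner ideal of the $(d-2)$-skeleton of the $(n-1)$-simplex); each such prime has height $n-d+1$, giving $\dim R_n/I_{d,n}=d-1$. Cohen--Macaulayness then follows from Theorem~\ref{thm:1}, which identifies $\operatorname{pd}_{R_n}(R_n/I_{d,n})=n-d+1$: the Auslander--Buchsbaum formula yields $\operatorname{depth}(R_n/I_{d,n})=d-1$, matching the Krull dimension.

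Next I would verify that $e_1,\ldots,e_{d-1}$ form a homogeneous system of parameters by showing that $V(I_{d,n}+(e_1,\ldots,e_{d-1}))=\{0\}$. A closed point of $V(I_{d,n})$ has support $S\subseteq[n]$ of cardinality $r\leq d-1$, and the restriction of $e_k$ (for $k\leq r$) to such a point is the $k$-th elementary symmetric polynomial in the nonzero coordinates $\{x_i : i\in S\}$. Imposing $e_1=\cdots=e_r=0$ forces $\prod_{i\in S}(t-x_i)=t^r$, hence $x_i=0$ for every $i\in S$. So the common vanishing locus is the origin, cutting the dimension down to zero and confirming the system-of-parameters property; together with Cohen--Macaulayness this gives regularity whenever $A$ is a field.

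The main obstacle is extending the argument from the field case to an arbitrary associative commutative base ring $A$, since Auslander--Buchsbaum applies only in a local or graded setting with a residue field. I would handle this by first running the preceding argument over every field, concluding that the Koszul complex $K_\bullet(e_1,\ldots,e_{d-1};R_n/I_{d,n})$ is acyclic in positive degrees after tensoring with any field. The quotient $R_n/I_\mu$ admits an explicit monomial $A$-basis independent of $A$ (for instance a Garsia--Procesi type basis), and in particular is $\mathbb{Z}$-flat; this removes any possible Tor obstruction, so by the universal coefficient theorem the Koszul complex is also acyclic over $\mathbb{Z}$, and hence over an arbitrary $A$ by flat base change, since every term of the complex is a free $\mathbb{Z}$-module.
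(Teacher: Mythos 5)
The paper gives no proof of Proposition~\ref{pro:3}: it simply cites \cite[Proposition 4.3]{MR2371263}, where the result is established over a field by exactly the strategy you use — $R_n/I_{d,n}$ is Cohen--Macaulay of dimension $d-1$ (there via Stanley--Reisner theory of the skeleton of a simplex; your Auslander--Buchsbaum computation of depth from the Betti numbers of Theorem~\ref{thm:1} is a valid alternative) and $e_1,\ldots,e_{d-1}$ is a homogeneous system of parameters. Your extension to an arbitrary coefficient ring $A$ is a genuine addition that the paper leaves implicit, and the core idea (work over $\mathbb{Z}$, use that the Koszul terms are free abelian groups, apply universal coefficients degree by degree, then base--change) is sound, but two points should be tightened. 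First, the appeal to a Garsia--Procesi basis is both unnecessary and not an off-the-shelf fact over $\mathbb{Z}$: the universal-coefficient sequence $0\to H_i(K)\otimes_{\mathbb{Z}} k\to H_i(K\otimes_{\mathbb{Z}} k)\to\operatorname{Tor}_1^{\mathbb{Z}}(H_{i-1}(K),k)\to 0$, taken over $k=\mathbb{Q}$ and all $k=\mathbb{F}_p$ together with the degree-wise finite generation of the $H_i(K)$, already forces $H_i(K)=0$ for $i>0$ and $H_0(K)$ torsion-free, hence degree-wise free, with no external input. Second, ``flat base change'' is the wrong justification for the final step, since $A$ need not be flat over $\mathbb{Z}$: what makes it work is that the augmented Koszul complex over $\mathbb{Z}$ is an acyclic complex of free $\mathbb{Z}$-modules whose $H_0$ is free, hence is split, so $-\otimes_{\mathbb{Z}}A$ preserves exactness for every $A$. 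One should also remark, since $R_0=A$ is not local, that vanishing of positive Koszul homology over $A$ still implies regularity here by graded Nakayama (the $e_i$ are homogeneous of positive degree and all modules are bounded below). With these adjustments the argument is complete and correctly supplies the general-$A$ case that the cited reference only treats over a field.
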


Using this property, R.~Biagioli, S.~Faridi, and M.~Rosas observed
that a free resolution of $R_n / I_\mu$ can be obtained from one of
$R_n / I_{d,n}$ via iterated mapping cones. Moreover, Proposition
\ref{pro:3}, guarantees that the mapping cones are minimal. The
details of the procedure are discussed in \cite{MR2371263}.

This mapping cone construction extends immediately to the equivariant
case. In fact, we have an $\mathfrak{S}_n$-equivariant resolution of
$I_{d,n}$ from Theorem \ref{thm:2} and the mapping cones are induced
from multiplication by an elementary symmetric polynomial, which is an
$\mathfrak{S}_n$-invariant. We illustrate with an example.

\begin{example}[$n=4,d=2$]
  \label{exa:2}
  Based on Proposition \ref{pro:2}, we have
  $I_{(3,1)} = (e_1) + I_{2,4}$. From Proposition \ref{pro:3}, we know
  that $e_1$ is regular on $R_4 / I_{2,4}$. Therefore we have a short
  exact sequence
  \begin{equation*}
    0 \to R_4 / I_{2,4} \xrightarrow{\cdot e_1} R_4 / I_{2,4}
    \to R_4 / I_{(3,1)} \to 0
  \end{equation*}
  where the second map is multiplication by $e_1$.

  From Example \ref{exa:1}, we have the following resolution of
  $R_4 / I_{2,4}$.
  \begin{equation*}
    R_4 \xleftarrow{\partial^{4,2}_0}
    F^{2,4}_0 \xleftarrow{\partial^{4,2}_1}
    F^{2,4}_1 \xleftarrow{\partial^{4,2}_2}
    F^{2,4}_2 \leftarrow 0
  \end{equation*}
  The map $\partial^{4,2}_0$ was defined in the proof of Theorem
  \ref{thm:2}.  Multiplication by $e_1$ in the short exact
  sequence above extends to a map between two copies of the resolution
  of $R_4 / I_{2,4}$. The mapping cone of this map of complexes looks
  as follows.
  \begin{multline*}
    R_4 \xleftarrow{
      \left[
      \begin{smallmatrix}
        \partial^{2,4}_0 & e_1
      \end{smallmatrix}
      \right]
    } F^{2,4}_0 \oplus R_4(-1) \xleftarrow{
      \left[
      \begin{smallmatrix}
        \partial^{2,4}_1 & e_1\\
        0 & -\partial^{2,4}_0
      \end{smallmatrix}
      \right]
    } F^{2,4}_1 \oplus F^{2,4}_0 (-1) \leftarrow\\
    \xleftarrow{
      \left[
      \begin{smallmatrix}
        \partial^{2,4}_2 & e_1\\
        0 & -\partial^{2,4}_1
      \end{smallmatrix}
      \right]
    } F^{2,4}_2 \oplus F^{2,4}_1 (-1) \xleftarrow{
      \left[
      \begin{smallmatrix}
        e_1\\
        -\partial^{2,4}_2
      \end{smallmatrix}
      \right]
    } F^{2,4}_2 (-1) \leftarrow 0
  \end{multline*}
  By the general theory, this complex is an
  $\mathfrak{S}_n$-equivariant minimal free resolution of $R_4 / I_{(3,1)}$.
\end{example}

A complete description of the differentials in the iterated mapping
cone complex resolving $I_\mu$ would be notationally
cumbersome. Instead we focus on a description of the
modules. Following the example of Biagioli, Faridi, and Rosas, we will
use a bigraded Poincar\'e series. We start by recalling some
terminology.

The Grothendieck ring of $A[\mathfrak{S}_n]$ is the free abelian group
generated by isomorphism classes $[P]$ of finitely generated
projective $A[\mathfrak{S}_n]$-modules modulo the relations
$[P']-[P]+[P'']=0$ for every short exact sequence
\begin{equation*}
  0\to P' \to P \to P''\to 0.
\end{equation*}
Addition is defined by $[P_1] + [P_2]:=[P_1 \oplus P_2]$, while
multiplication is defined by $[P_1] [P_2]:=[P_1 \otimes P_2]$. Note
that the class $[S^{(n)}]$ of the rank one trivial
$A[\mathfrak{S}_n]$-module is the identity element for multiplication;
thus we will simply write 1 for $[S^{(n)}]$.

\begin{remark}
  The $A[\mathfrak{S}_n]$-module $U^{d,n}_i$ is projective. To see
  this, consider the following diagram of $A[\mathfrak{S}_n]$-modules
  with exact rows.
  \begin{center}
    \begin{tikzpicture}
      \matrix (m) [matrix of math nodes, row sep=3em, column sep=2.5em,
      text height=2ex, text depth=0.25ex]
      { & U^{d,n}_i & \\
        M & N & 0 \\ };
      \path[->,font=\scriptsize]
      (m-2-1) edge node[auto] {$\psi$} (m-2-2)
      (m-2-2) edge node[auto] {} (m-2-3)
      (m-1-2) edge node[auto] {$\varphi$} (m-2-2);
      \path[<-,dashed,font=\scriptsize]
      (m-2-1) edge node[auto] {$\rho$} (m-1-2);
    \end{tikzpicture}
  \end{center}
  Let $T$ be a tableau of shape $(d,1^i)$ with entries in $[n]$, so
  that $[T]$ generates $U^{d,n}_i$. Since $\psi$ is surjective, there
  exists $m\in M$ such that $\psi (m) = \varphi ([T])$. The
  $A$-submodule of $U^{d,n}_i$ generated by $[T]$ is free. Therefore
  we can define $\rho \colon U^{d,n}_i \to M$ by setting
  $\rho ([T]) := m$, and extending by
  $A[\mathfrak{S}_n]$-linearity. This guarantees that $\rho$ is a map
  of $A[\mathfrak{S}_n]$-modules. Moreover, the equality
  $\rho ([T]) = m$, implies that $\varphi = \psi\rho$.

  Since $U^{d,n}_i$ is a finitely generate projective
  $A[\mathfrak{S}_n]$-module, we can consider its class
  $[U^{d,n}_i]$ in the Grothendieck ring of $A[\mathfrak{S}_n]$.
\end{remark}

\begin{definition}
  Let $M$ be a finitely generated graded
  $R_n [\mathfrak{S}_n]$-module.  Suppose that, for all integers
  $i,j$, $\operatorname{Tor}^{R_n}_i (M,A)_j$ is a finitely generated
  projective $A[\mathfrak{S}_n]$-module. The
  \emph{$\mathfrak{S}_n$-equivariant bigraded Poincar\'e series} of
  $M$ is the power series
  \begin{equation*}
    \operatorname{P}^{\mathfrak{S}_n}_M (q,t) := \sum_{i,j}
    [\operatorname{Tor}^{R_n}_i (M,A)_j] q^i t^j
  \end{equation*}
  in the variables $q,t$ with coefficients in the Grothendieck ring of
  $A[\mathfrak{S}_n]$.
\end{definition}

This equivariant Poincar\'e series is simply a compact device to keep
track of the representations appearing in an equivariant resolution.

\begin{example}
  Comparing with Example \ref{exa:2}, we have the equality
  \begin{equation*}
    \operatorname{P}^{\mathfrak{S}_n}_{R_4/I_{2,4}} (q,t)=
    1+[U^{2,4}_0]qt^2+[U^{2,4}_1]q^2t^3+[U^{2,4}_2]q^3t^4,
  \end{equation*}
  and also
  \begin{equation*}
    \begin{split}
      \operatorname{P}^{\mathfrak{S}_n}_{R_4/I_{(3,1)}} (q,t)
      &=1+[U^{2,4}_0]qt^2+qt+([U^{2,4}_1]+[U^{2,4}_0])q^2t^3+\\
      &\quad +([U^{2,4}_2]+[U^{2,4}_1])q^3t^4+[U^{2,4}_2]q^4t^5=\\
    &=(1+qt)(1+[U^{2,4}_0]qt^2+[U^{2,4}_1]q^2t^3+[U^{2,4}_2]q^3t^4).
    \end{split}
  \end{equation*}
\end{example}

Finally, we are in a position to state the main result for De
Concini-Procesi rings indexed by hook partitions. This theorem is an
immediate consequence of Theorem \ref{thm:2} and the discussion of
this section.

\begin{theorem}
  \label{thm:3}
  Let $\mu = (n-d+1,1^{d-1})$, with $1\leq d\leq n$. Then we have
  \begin{equation*}
    \operatorname{P}^{\mathfrak{S}_n}_{R_n / I_\mu} (q,t)=
    \prod_{k=1}^{d-1} (1+qt^k) \left(1+
      \sum_{i=0}^{n-d} [U^{d,n}_i] q^{i+1} t^{d+i} \right).
  \end{equation*}
\end{theorem}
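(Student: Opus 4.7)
The plan is to combine the equivariant resolution of $I_{d,n}$ from Theorem \ref{thm:2} with the iterated mapping cone procedure of Biagioli--Faridi--Rosas recalled above, performed $\mathfrak{S}_n$-equivariantly. By Proposition \ref{pro:2} we have $I_\mu = I_{d,n} + (e_1, \ldots, e_{d-1})$, and by Proposition \ref{pro:3} the classes of $e_1, \ldots, e_{d-1}$ form a regular sequence on $R_n/I_{d,n}$. The crucial additional observation is that each $e_k$ is $\mathfrak{S}_n$-invariant and homogeneous of degree $k$, so multiplication by $e_k$ is an equivariant degree-$k$ endomorphism of any equivariant graded $R_n$-module.

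From Theorem \ref{thm:2}, together with the augmentation $R_n \to R_n/I_{d,n}$, one reads off the Tor modules and obtains
\begin{equation*}
  \operatorname{P}^{\mathfrak{S}_n}_{R_n/I_{d,n}}(q,t) = 1 + \sum_{i=0}^{n-d} [U^{d,n}_i]\,q^{i+1}\,t^{d+i},
\end{equation*}
which is exactly the second factor in the claim. Set $J_0 := I_{d,n}$ and $J_k := I_{d,n} + (e_1, \ldots, e_k)$. I would then induct on $k$, proving that $R_n/J_k$ admits an $\mathfrak{S}_n$-equivariant minimal free resolution with equivariant Poincar\'e series $\prod_{j=1}^{k}(1 + qt^j)\cdot \operatorname{P}^{\mathfrak{S}_n}_{R_n/I_{d,n}}(q,t)$. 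The inductive step uses the regularity of $e_{k+1}$ on $R_n/J_k$ to produce the short exact sequence
\begin{equation*}
  0 \to (R_n/J_k)(-k-1) \xrightarrow{\cdot e_{k+1}} R_n/J_k \to R_n/J_{k+1} \to 0,
\end{equation*}
whose maps are equivariant; lifting $\cdot e_{k+1}$ to an equivariant chain map between two appropriately shifted copies of the minimal resolution of $R_n/J_k$ and forming its mapping cone yields an equivariant free resolution of $R_n/J_{k+1}$. Minimality is automatic, since the new entries contributed by the cone are multiples of $e_{k+1}$ and so lie in the irrelevant maximal ideal, while the original entries are minimal by the inductive hypothesis. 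Taking classes in the Grothendieck ring of $A[\mathfrak{S}_n]$, the homological shift by one and internal degree shift by $k+1$ together multiply the Poincar\'e series by $(1 + qt^{k+1})$.

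Iterating from $k = 0$ up through $k = d-1$ yields $J_{d-1} = I_\mu$, and the telescoping product delivers the claimed formula. The only point meriting genuine care is the equivariance of the chain lift of $\cdot e_{k+1}$; fortunately this is automatic in a strong sense, since one can simply take the chain map on free modules to be scalar multiplication by $e_{k+1}$, which commutes with the $R_n$-linear differentials and with the $\mathfrak{S}_n$-action because $e_{k+1}$ is invariant. Everything else is routine bookkeeping in the Grothendieck ring.
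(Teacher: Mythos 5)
Your proposal is correct and takes the same approach as the paper: equivariant iterated mapping cones over the regular sequence $e_1,\ldots,e_{d-1}$ (Propositions \ref{pro:2} and \ref{pro:3}), with the lift of $\cdot e_{k+1}$ taken to be multiplication by the invariant element $e_{k+1}$ on each free module, starting from the equivariant resolution of Theorem~\ref{thm:2}. The paper leaves these steps implicit, declaring the result an immediate consequence of Theorem~\ref{thm:2} and the surrounding discussion (and illustrating with Example~\ref{exa:2}); you have simply written out the induction and bookkeeping in full.
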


\subsection{A resolution of FI-modules}
\label{sec:resol-fi-modul}

The framework of FI-modules, developed by T.~Church, J.S.~Ellenberg,
and B.~Farb, allows us to assemble all the complexes
$F^{d,n}_\bullet$, for a fixed $d$, into a single comprehensive
structure. For all details on FI-modules, we refer the reader to
\cite{MR3357185}. Our base ring will be $A$, and it should replace any
instance of $k$ occurring in the reference mentioned. We do not appeal
to any deep result about FI-modules, as we are merely interested in
the language they provide.  Explicitly, we will use the notions of
FI-module (Definition 1.1), morphism of FI-modules (Definition 2.1.1),
graded FI-module (Remark 2.1.5), and exactness in the category of
FI-modules (which follows from Remark 2.1.2).

\begin{remark}
  The notation in \cite{MR3357185} uses a capital letter, say $V$, for
  an FI-module, and $V_n$ for the object that $V$ associates to the
  set $[n]$. To distinguish FI-modules from other entities, we will
  use a capital calligraphic letter, say $\mathcal{V}$, for an
  FI-module. Moreover, our FI-modules will carry subscripts for
  homological dimension and graded components. To avoid using another
  subscript, we will write $\mathcal{V} (n)$ for the object that $V$
  associates to the set $[n]$.
\end{remark}

\begin{definition}
  For a positive integer $d$, define $\mathcal{I}_d$ to be the graded
  FI-module that associates:
  \begin{itemize}[wide]
  \item to the set $[n]$ the graded $A$-module $I_{d,n}$;
  \item to an injection $\varepsilon \colon [n] \to [m]$ the map
    $\mathcal{I}_d (\varepsilon) \colon I_{d,n} \to I_{d,m}$ of graded
    $A$-modules defined by
    $p (x_1,\ldots,x_n) \mapsto p
    (x_{\varepsilon(1)},\ldots,x_{\varepsilon(n)})$.
  \end{itemize}
\end{definition}

Let $T$ be a tableau of shape $(d,1^i)$ with entries in $[n]$. If
$\varepsilon \colon [n] \to [m]$ is an injection, then denote by
$\varepsilon (T)$ the tableau of shape $(d,1^i)$ with entries in $[m]$
obtained by replacing each entry $i$ of $T$ by $\varepsilon (i)$.

\begin{definition}
  For integers $d$ and $i$, with $d>0$ and $i\geq 0$, define
  $\mathcal{F}^d_i$ to be the FI-module that associates:
  \begin{itemize}[wide]
  \item to the set $[n]$ the graded $A$-module $F^{d,n}_i$;
    % U^{d,n}_i \otimes_A (R_n)_{j-d-i}$;
  \item to an injection $\varepsilon \colon [n] \to
    [m]$ the map $\mathcal{F}^d_i (\varepsilon) \colon F^{d,n}_i \to
    F^{d,m}_i$ of graded $A$-modules given by $p (x_1,\ldots,x_n) [T]
    \mapsto p
    (x_{\varepsilon(1)},\ldots,x_{\varepsilon(n)})[\varepsilon
    (T)]$, where $T$ is a tableau of shape
    $(d,1^i)$ with entries in $[n]$.
  \end{itemize}
\end{definition}

\begin{proposition}
  \label{pro:5}
  Let $d$ and $i$ be positive integers. For all integers
  $m,n$, with $m\geq n\geq 0$, and all injections $\varepsilon\colon
  [n]\to [m]$, the diagrams of graded $A$-modules and maps
  \begin{center}
    \begin{tikzpicture}
      \matrix (m) [matrix of math nodes, row sep=3em, column sep=2.5em,
      text height=2ex, text depth=0.25ex]
      { I_{d,m} & F^{d,m}_0 \\
        I_{d,n} & F^{d,n}_0 \\};
      \path[<-,font=\scriptsize]
      (m-1-1) edge node[left] {$\mathcal{I}_d (\varepsilon)$} (m-2-1)
      (m-2-1) edge node[auto] {$\partial^{d,n}_0$} (m-2-2)
      (m-1-1) edge node[auto] {$\partial^{d,m}_0$} (m-1-2)
      (m-1-2) edge node[auto] {$\mathcal{F}^d_0 (\varepsilon)$} (m-2-2);
    \end{tikzpicture}
    \begin{tikzpicture}
      \matrix (m) [matrix of math nodes, row sep=3em, column sep=2.5em,
      text height=2ex, text depth=0.25ex]
      { F^{d,m}_{i-1} & F^{d,m}_i \\
        F^{d,n}_{i-1} & F^{d,n}_i \\};
      \path[<-,font=\scriptsize]
      (m-1-1) edge node[left] {$\mathcal{F}^d_{i-1} (\varepsilon)$} (m-2-1)
      (m-2-1) edge node[auto] {$\partial^{d,n}_i$} (m-2-2)
      (m-1-1) edge node[auto] {$\partial^{d,m}_i$} (m-1-2)
      (m-1-2) edge node[auto] {$\mathcal{F}^d_i (\varepsilon)$} (m-2-2);
    \end{tikzpicture}
  \end{center}
  are commutative.
\end{proposition}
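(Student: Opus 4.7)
The plan is to verify each square by a direct computation on generating elements, using the explicit formula for the differential given in Definition \ref{def:1} and exploiting the principal generation of $F^{d,n}_i$ described in Proposition \ref{pro:4}.

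As a preliminary, I would record that $\mathcal{F}^d_i(\varepsilon)$ is a well-defined map of graded $A$-modules. Because $\varepsilon$ is injective, the operation $T\mapsto\varepsilon(T)$ on tableaux is compatible with the alternating column and shuffling relations defining $U^{d,n}_i$: a column-preserving permutation of $T$ corresponds to a column-preserving permutation of $\varepsilon(T)$ of the same sign, and the shuffling sum indexed by subsets of columns of $T$ maps to the analogous sum on $\varepsilon(T)$. Combined with the substitution rule $p(x_1,\ldots,x_n)\mapsto p(x_{\varepsilon(1)},\ldots,x_{\varepsilon(n)})$ on $R_n$-coefficients, this gives a well-defined $A$-linear map of graded modules.

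For the augmentation square, I would evaluate both compositions on a generator $[T]$ with $T$ of shape $(d)$ and entries $b_1,\ldots,b_d$ in $[n]$: along either route $[T]$ lands on the monomial $x_{\varepsilon(b_1)}\cdots x_{\varepsilon(b_d)} \in I_{d,m}$. For a general element $p[T]\in F^{d,n}_0$, the $R_n$-linearity of $\partial^{d,n}_0$ and the common substitution rule used by both $\mathcal{I}_d(\varepsilon)$ and $\mathcal{F}^d_0(\varepsilon)$ yield agreement after extending $A$-linearly.

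For the second square, the key observation is the identity
\[
\varepsilon(T)\setminus \varepsilon(a_j) = \varepsilon(T\setminus a_j),
\]
which is immediate from the definitions, since removing the $j$-th box of the first column and relabeling the entries through $\varepsilon$ commute. On a generator $[T]$ with entries $a_0,\ldots,a_i,b_1,\ldots,b_{d-1}$, one computes directly from Definition \ref{def:1} that both compositions send $[T]$ to
\[
\sum_{j=0}^{i} (-1)^{i-j} x_{\varepsilon(a_j)} [\varepsilon(T\setminus a_j)].
\]
Extending to arbitrary $p[T]$ by the $R_n$-linearity of $\partial^{d,n}_i$ and the substitution rule built into $\mathcal{F}^d_i(\varepsilon)$ and $\mathcal{F}^d_{i-1}(\varepsilon)$ finishes the verification. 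I do not anticipate any substantial obstacle; the only mildly delicate step is the well-definedness of $\mathcal{F}^d_i(\varepsilon)$, after which each square collapses to a one-line identity from the explicit differential formula.
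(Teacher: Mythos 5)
Your proposal is correct and follows essentially the same line as the paper: verify the square on a generator tableau $[T]$ using the explicit formula from Definition \ref{def:1}, invoke the identity $\varepsilon(T)\setminus\varepsilon(a_j)=\varepsilon(T\setminus a_j)$, and then extend to all of $F^{d,n}_i$ from the generating classes of tableaux. The only cosmetic difference is that the paper reduces to a single generator $[T]$ via $\mathfrak{S}_n$-equivariance (identifying $\mathfrak{S}_n$ inside $\mathfrak{S}_m$ through $\varepsilon$), whereas you extend over all tableaux $[T']$ using the substitution ring homomorphism; these are equivalent, and your added well-definedness check for $\mathcal{F}^d_i(\varepsilon)$ is a sound precaution the paper leaves implicit in the definition.
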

\begin{proof}
  The proof of commutativity of the two diagrams is similar. We write
  a proof for the diagram on the right.

  If $F^{d,n}_i =0$, then the diagram is obviously commutative. Hence
  we assume that $F^{d,n}_i \neq 0$. It follows that all other modules
  appearing in the diagram are also nonzero.

  The group $\mathfrak{S}_m$ acts on $F^{d,m}_{i-1}$ and $F^{d,m}_i$,
  while $\mathfrak{S}_n$ acts on $F^{d,n}_{i-1}$ and $F^{d,n}_i$.  By
  construction, the map $\partial^{d,m}_i$ is
  $\mathfrak{S}_m$-equivariant and $\partial^{d,n}_i$ is
  $\mathfrak{S}_n$-equivariant.  Fix an injection
  $\varepsilon \colon [n]\to [m]$. The choice of $\varepsilon$ gives
  rise to a natural identification of $\mathfrak{S}_n$ with a subgroup
  of $\mathfrak{S}_m$. With this convention, all maps in the diagram
  are $\mathfrak{S}_n$-equivariant.
  
  Let $T$ be a tableau of shape $(d,1^i)$ with entries in $[n]$. By
  Proposition \ref{pro:4}, $F^{d,n}_i$ is generated, as an
  $R_n [\mathfrak{S}_n]$-module, by $[T]$. Therefore it is enough to
  prove commutativity of the diagram holds for $[T]$. On one hand we
  have
  \begin{equation*}
    \begin{split}
      \mathcal{F}^d_{i-1} (\varepsilon) \partial^{d,n}_i ([T])
      &= \mathcal{F}^d_{i-1} (\varepsilon) \left(\sum_{j=0}^i
        (-1)^{i-j} x_{a_j} [T \setminus a_j] \right) =\\
      &= \sum_{j=0}^i (-1)^{i-j} x_{\varepsilon (a_j)}
      [\varepsilon (T \setminus a_j)] =\\
      &= \sum_{j=0}^i (-1)^{i-j} x_{\varepsilon (a_j)} [\varepsilon
      (T) \setminus \varepsilon (a_j)].
    \end{split}
  \end{equation*}
  On the other hand
  \begin{equation*}
    \partial^{d,m}_i \mathcal{F}^d_i (\varepsilon) ([T]) =
    \partial^{d,m}_i ([\varepsilon (T)]) =
    \sum_{j=0}^i (-1)^{i-j} x_{\varepsilon (a_j)}
    [\varepsilon (T) \setminus \varepsilon (a_j)].
  \end{equation*}
  This shows that
  $\mathcal{F}^d_{i-1} (\varepsilon) \partial^{d,n}_i ([T])
  = \partial^{d,m}_i \mathcal{F}^d_i (\varepsilon) ([T])$. Therefore
  the diagram commutes.
\end{proof}

As a consequence of Proposition \ref{pro:5}, we are allowed to make
the following definitions.

\begin{definition}
  For a positive integer $d$, let
  $\partial^d_0 \colon \mathcal{F}^d_0 \to \mathcal{I}_d$ be the
  morphism of graded FI-modules defined by setting
  $\partial^d_0 ([n]) := \partial^{d,n}_0$ for all integers $n$.
\end{definition}

\begin{definition}
  For positive integers $d$ and $i$, let
  $\partial^d_i \colon \mathcal{F}^d_i \to \mathcal{F}^d_{i-1}$ be the
  morphism of graded FI-modules defined by setting
  $\partial^d_i ([n]) := \partial^{d,n}_i$ for all integers $n$.
\end{definition}

\begin{theorem}
  \label{thm:4}
  For each positive integer $d$, there is a resolution of graded
  FI-modules
  \begin{equation*}
    0 \leftarrow \mathcal{I}_d \xleftarrow{\partial^d_0} \mathcal{F}^d_0
    \xleftarrow{\partial^d_1} \mathcal{F}^d_1
    \xleftarrow{\partial^d_2} \mathcal{F}^d_2
    \leftarrow \ldots \leftarrow \mathcal{F}^d_{i-1}
    \xleftarrow{\partial^d_i} \mathcal{F}^d_i \leftarrow \ldots
  \end{equation*}
\end{theorem}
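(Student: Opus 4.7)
My plan is to reduce the theorem, essentially by bookkeeping, to Theorem \ref{thm:2} and Proposition \ref{pro:5}. Recall that a sequence of FI-modules is exact precisely when it is exact object-wise, i.e., after evaluating on each set $[n]$ (this is the content of Remark 2.1.2 in \cite{MR3357185}). So the strategy is: first verify that the proposed data really defines a complex of graded FI-modules, then invoke Theorem \ref{thm:2} at each $[n]$ to conclude exactness.

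First I would observe that by construction (Proposition \ref{pro:5}) each $\partial^d_i$ is a well-defined morphism of graded FI-modules, since its value on every object $[n]$ is the equivariant map $\partial^{d,n}_i$ and the required naturality squares with respect to injections $\varepsilon\colon [n]\to [m]$ have already been verified. The same proposition, applied to the square whose horizontal arrows are $\partial^{d,n}_0$ and $\partial^{d,m}_0$, also ensures that $\partial^d_0\colon \mathcal{F}^d_0\to \mathcal{I}_d$ is a morphism of graded FI-modules.

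Next I would check that the data forms a complex. This amounts to showing $\partial^d_{i-1}\circ \partial^d_i = 0$ and $\partial^d_0\circ \partial^d_1 = 0$ as morphisms of FI-modules; by object-wise evaluation, both reduce to the corresponding identities $\partial^{d,n}_{i-1}\partial^{d,n}_i = 0$ and $\partial^{d,n}_0\partial^{d,n}_1 = 0$ at each $n$, which are part of Theorem \ref{thm:2} (the former is essentially the Koszul-style computation already noted, and the latter appears explicitly in the proof of Theorem \ref{thm:2}).

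Finally, I would prove exactness of the complex
\begin{equation*}
  0 \leftarrow \mathcal{I}_d \xleftarrow{\partial^d_0} \mathcal{F}^d_0 \xleftarrow{\partial^d_1} \mathcal{F}^d_1 \leftarrow \cdots
\end{equation*}
by evaluating at an arbitrary $[n]$: this yields precisely the augmented complex $0\leftarrow I_{d,n}\leftarrow F^{d,n}_\bullet$, which is exact by Theorem \ref{thm:2}. Since kernels, images, and cokernels of FI-module morphisms are formed object-wise, this object-wise exactness is exactness in the category of FI-modules, completing the argument. There is no real obstacle here beyond verifying that the definitions of $\mathcal{F}^d_i$ and $\partial^d_i$ are compatible with the injections in the FI category, which is exactly what Proposition \ref{pro:5} supplies; the theorem is then an immediate packaging of results already proved.
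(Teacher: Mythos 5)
Your proposal is correct and follows essentially the same route as the paper: both reduce the statement to object-wise exactness, observe via Proposition \ref{pro:5} that the maps $\partial^d_i$ are genuine morphisms of graded FI-modules, and then invoke Theorem \ref{thm:2} to conclude that evaluation at each $[n]$ produces the exact complex $F^{d,n}_\bullet$. Your writeup is a little more explicit about verifying the complex property $\partial^d_{i-1}\partial^d_i=0$, but this is already contained in Theorem \ref{thm:2}, so the two arguments coincide in substance.
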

\begin{proof}
  The sequence of graded FI-modules and morphisms in the statement of
  the theorem is functorial on the category of finite sets with
  injections.  Thus, given a nonnegative integer $n$, the sequence
  applies to the set $[n]$ to produce a complex of $R_n$-modules,
  namely the complex $F^{d,n}_\bullet$.  By definition, the sequence
  in the statement of the theorem is exact if and only if
  $F^{d,n}_\bullet$ is exact for all $n\in\NN$. The latter is true by
  virtue of Theorem \ref{thm:2}.
\end{proof}

\begin{remark}
  All graded FI-modules appearing in this section are of finite type
  in the sense of \cite[Definition 4.2.1]{MR3357185}.
\end{remark}

\begin{remark}
  One can write an analogue of Theorem \ref{thm:4} for De
  Concini-Procesi rings indexed by partitions $\mu = (n-d+1,1^{d-1})$.
\end{remark}

\end{document}